\documentclass[letterpaper,10pt,reqno,onefignum,onetabnum]{amsart}
\usepackage[english]{babel}
\usepackage{amsmath}
\usepackage{amsthm}
\usepackage{verbatim}
\usepackage[foot]{amsaddr}
\usepackage{delimset}
\usepackage[left]{lineno}
\usepackage[dvipsnames]{xcolor}
\usepackage{hyperref}
\hypersetup{
	colorlinks = true,
	linkcolor = OliveGreen,
	anchorcolor = OliveGreen,
	citecolor = OliveGreen,
	filecolor = OliveGreen,
	urlcolor = OliveGreen
}
\usepackage{algorithm}
\usepackage{algorithmic}
\usepackage{float}
\usepackage{lipsum}
\usepackage{amsfonts}
\usepackage{amssymb}
\usepackage{graphicx}
\usepackage{epstopdf}
\usepackage{cancel}
\usepackage{cases}
\usepackage{multirow}
\ifpdf
\DeclareGraphicsExtensions{.eps,.pdf,.png,.jpg}
\else
\DeclareGraphicsExtensions{.eps}
\fi

\newtheorem{teo}{Theorem}[section]
\newtheorem{prop}[teo]{Proposition}
\newtheorem{lem}[teo]{Lemma}

\newtheorem{pro}[teo]{Problem}
\newtheorem{algo}[teo]{Algorithm}

\newtheorem{rem}[teo]{Remark}
\newtheorem{example}[teo]{Example}

\usepackage{algorithmic}
\usepackage{tikz}
\usepackage{booktabs}%
\usetikzlibrary{matrix}
\usetikzlibrary{arrows}
\usepackage{mathrsfs}
\usepackage{bm}
\usepackage{color}
\usepackage{xcolor}
\usepackage{subcaption}

\newcommand{\N}{\mathbb N}

\newcommand{\R}{\mathbb R}

\renewcommand{\H}{\mathcal{H}}
\newcommand{\G}{\mathcal G}

\newcommand{\HH}{{\bm{\mathcal{H}}}}

\newcommand{\id}{\textnormal{Id}}

\newcommand{\lev}{\textnormal{lev}}
\newcommand{\weak}{\rightharpoonup}
\newcommand{\ran}{\textnormal{ran}\,}
\newcommand{\dom}{\textnormal{dom}\,}

\newcommand{\interior}{\textnormal{int}}
\newcommand{\zer}{\textnormal{zer}}

\newcommand{\gra}{\textnormal{gra}\,}

\newcommand{\scal}[2]{{\left\langle{{#1}\mid{#2}}\right\rangle}}

\newcommand{\menge}[2]{\big\{{#1}~\big |~{#2}\big\}}

\newcommand{\RP}{\ensuremath{\left[0,+\infty\right[}}
\newcommand{\RM}{\ensuremath{\left]-\infty,0\right]}}
\newcommand{\RMM}{\ensuremath{\left]-\infty,0\right[}}

\newcommand{\RPP}{\ensuremath{\left]0,+\infty\right[}}

\newcommand{\RX}{\ensuremath{\left]-\infty,+\infty\right]}}

\newcommand{\sri}{\ensuremath{\text{\rm sri}\,}}
\newcommand{\inte}{\ensuremath{\operatorname{int}}}

\newcommand{\prox}{\ensuremath{\text{\rm prox}\,}}
\newcommand{\conv}{\ensuremath{\text{\rm conv}\,}}

\usepackage{geometry}
\numberwithin{equation}{section}

\numberwithin{equation}{section}

\DeclareFontEncoding{FMS}{}{}
\DeclareFontSubstitution{FMS}{futm}{m}{n}
\DeclareFontEncoding{FMX}{}{}
\DeclareFontSubstitution{FMX}{futm}{m}{n}
\DeclareSymbolFont{fouriersymbols}{FMS}{futm}{m}{n}
\DeclareSymbolFont{fourierlargesymbols}{FMX}{futm}{m}{n}
\DeclareMathDelimiter{\nr}{\mathord}{fouriersymbols}{152}{fourierlargesymbols}{147}

\DeclareMathOperator*{\argmin}{arg\,min}
\DeclareMathDelimiter{\nr}{\mathord}{fouriersymbols}{152}{fourierlargesymbols}{147}
\DeclareMathAlphabet{\mathpzc}{OT1}{pzc}{m}{it}

\title[Forward-Reflected-Backward with Linesearch]{Forward-Reflected-Backward algorithm with Linesearch}

\author{Fernando Muñoz García$^{1}$}
\author{Fernando Rold\'an$^{2}$}
\address{$^{1}$ Departamento de Ingeniería Matemática, Universidad de Concepción, Concepción, Chile.}
\address{$^{2}$ Departamento de Ingeniería Matemática and CI$^2$MA, Universidad de Concepción, Concepción, Chile.}
\email{femunoz2022@udec.cl}
\email{fernandoroldan@udec.cl}
\begin{document}
	\begin{abstract}
      In this article, we aim to solve a monotone inclusion problem involving the sum of a maximally monotone operator and a continuous operator. While several algorithms exist to solve this problem when the continuous operator is cocoercive or Lipschitz continuous, they typically require the estimation of the global Lipschitz constant, which can be computationally expensive and often imposes overly restrictive step-sizes. To avoid these limitations and to handle merely continuous operators, linesearch subroutines are employed. A popular method in this context is the forward-backward-forward (FBF) algorithm (also known as Tseng's splitting), which utilizes a linesearch to guarantee convergence. However, a drawback of FBF is that the continuous operator must be evaluated twice per iteration. On the other hand, the forward-reflected-backward (FRB) algorithm proposed by Malitsky and Tam (2020) requires only a single evaluation of the operator per iteration. Although a linesearch version of FRB exists, its convergence is guaranteed only for locally Lipschitz operators; in fact, we present an example demonstrating that this existing linesearch can fail to terminate when the operator is merely continuous. In this work, we propose a novel linesearch strategy for FRB that is well defined and guarantees convergence even when the operator is merely continuous. We also extend the proposed algorithm to handle additional cocoercive and Lipschitz continuous operators. Finally, we provide numerical experiments on saddle-point problems and image restoration. The numerical results show that FRB with the proposed linesearch can accelerate the numerical convergence even when the operator is Lipschitz continuous. In addition, these results show that the proposed method is competitive with linesearch FBF, offering considerable computational advantages in various scenarios.
		\par
		\bigskip
		
		\noindent \textbf{Keywords.} {\it splitting algorithms, convergence analysis, convex optimization, linesearch, forward-reflected-backward}
		\par
		\bigskip \noindent
		2020 {\it Mathematics Subject Classification.} {47H05, 47H10, 65K05, 90C25.}
	\end{abstract}
		\maketitle
	\section{Introduction} 
Monotone inclusion problems model several applications, such as variational inequalities and optimization \cite{bauschkebook2017,Combettes2018MP}, equilibrium problems \cite{Combettes2005equilibrium}, partial differential equations \cite{AubinHelene2009,Glowinsky1975,Showalter1997}, signal processing and imaging \cite{BotHendrich2014TV,Briceno2011ImRe,BurgerSawatzkySteidl2014,chambolle2016AN}, traffic theory \cite{Nets1,GafniBert84}, machine learning \cite{BotrelaxFBF2023,Nocedal2018,CombettesPesquet2021strategies}, among others. In this article, we aim to numerically solve a monotone inclusion problem in a real Hilbert space $\H$. In particular, the problem involves the sum of a maximally monotone operator $A \colon \H \to 2^\H$ and a monotone and continuous operator $B\colon \H \to \H$.  When $B=0$, this problem can be solved by the proximal point algorithm \cite{martinet1970,RockafellarSIAM1976}. When the operator $B$ is cocoercive, the problem can be solved by the forward-backward (FB) algorithm \cite{Chenhg1997,passty1979JMAA}. However, the FB algorithm can fail to converge when $B$ is merely Lipschitz continuous (for instance, considering $A=0$ and $B$ as a rotation operator in $\R^2$). To address the case where $B$ is Lipschitz continuous, Tseng proposed the forward-backward-forward (FBF) algorithm by including an additional forward step on $B$ \cite{Tseng2000SIAM}. Nevertheless, these methods require the knowledge of the cocoercivity or Lipschitz constant to initialize the routine. Estimating these constants can be problematic in itself, often leading to a computationally costly task. Moreover, the global constraints imposed on the step-size by these parameters may be significantly more restrictive than what the algorithm could allow locally during the iterations. In addition, not every continuous operator is Lipschitz continuous. In view of these facts, Tseng in \cite{Tseng2000SIAM} proposed a linesearch strategy to find an adequate step-size at each iteration, guaranteeing the convergence of FBF when $B$ is continuous and not necessarily Lipschitz continuous. The FBF method was combined with FB in \cite{BricenoDavis2018} to solve inclusion problems that additionally involved cocoercive operators; this method was called forward-backward-half-forward (FBHF) because the cocoercive operator requires only one evaluation at each step. Later, in \cite{BricenoRoldan4op}, the authors generalized the aforementioned works by proposing a method to solve monotone inclusions involving the sum of four operators, incorporating Lipschitz continuous operators as well. 

A major drawback of the FBF algorithm is the second activation of the operator $B$, which increases the computational cost of each iteration and becomes unfavorable in large-scale problems. Recently, new algorithms for solving the problem in the Lipschitz continuous setting have been proposed \cite{Cevher2020SVVA,Csetnek2019AMO,Malitsky2020SIAMJO}. These methods require only one evaluation of $B$ per iteration. In particular, the authors in \cite{Malitsky2020SIAMJO} proposed the forward-reflected-backward (FRB) method, which avoids a second activation of $B$ in the current step by storing its evaluation from the previous iterate. A version of FRB with linesearch was also proposed in \cite{Malitsky2020SIAMJO}, but its convergence was guaranteed only for locally Lipschitz operators. Consequently, the convergence of FRB for merely continuous operators remains an open problem. Additionally, a variant of FRB considering cocoercive operators, called forward-half-reflected-backward (FHRB), was also proposed in \cite{Malitsky2020SIAMJO}. Several recent papers extend the FRB framework: incorporating inertia and momentum \cite{Adeolu2023,DungNguyenFRB2026,RoldanVega2025,Yao2024,Zhang2025}, applying variance reduction techniques \cite{Tam2021,Tran-Dinh2025}, combining it with the Douglas--Rachford algorithm \cite{DuNguyen2025,lionsandmercier1979,RyuVu2020}, coupling it with the Partial Inverse method \cite{Roldan2024,Spingarn1985MP}, and extending it to nonlinear forward-backward methods with momentum correction \cite{MorinBanertGiselsson2022}, to name a few.

In this article, we propose a linesearch version of FRB for solving our main problem. In particular, we present a counterexample where the operator $B$ is not locally Lipschitz and the linesearch for FRB proposed in \cite{Malitsky2020SIAMJO} fails to stop. To overcome this limitation, we propose a novel linesearch strategy that also incorporates the image of $B$ from previous iterates, making the linesearch condition feasible. We further extend the proposed method to a four-operator splitting framework, allowing us to solve problems that additionally involve both cocoercive and Lipschitz continuous operators. As a consequence, we derive a splitting algorithm for solving linear composite optimization problems with nonlinear constraints. Finally, we present numerical experiments on saddle-point problems and image restoration to demonstrate the numerical advantages of the proposed method.

The remainder of the paper is organized as follows. Section~\ref{sec:pre} presents our notation and preliminary results. In Section~\ref{sec:probformluation}, we detail the problem formulation, provide a rigorous analysis of the existing methods in the literature, and present the counterexample for which FRB with linesearch for locally Lipschitz operators fails to terminate. We derive our main convergence results in Section~\ref{sec:MN} and extend them to include cocoercive and Lipschitz continuous operators in Section~\ref{sec:cocoandlips}. The numerical experiments are presented in Section~\ref{sec:num}. Finally, Section~\ref{sec:conclu} is dedicated to our concluding remarks.

\section{Preliminaries} \label{sec:pre}
In this article, $\H$ and $\G$ are real Hilbert spaces endowed with inner
product $\scal{\cdot}{\cdot}$ and induced norm $\|\cdot \|=\sqrt{\scal{\cdot}{\cdot}}$. Given $x \in \H$ and  $\delta \in \RPP$, $\mathcal {B}(x,\delta)$ denotes the ball centered at $x$ of radius $\delta$.
We denote
the strong and weak 
convergence by $\to$ and $\weak$, respectively. The identity operator is 
denoted by $\id$.  The
following classic inequalities will be used throughout the article:
\begin{eqnarray}\label{eq:CS}
	&\hspace*{-1cm}(\textnormal{Cauchy–Schwarz inequality})  &(\forall 
	(x,u) \in 
	\H^2) \quad 
	|\scal{x}{u}| \leq \|x\|\|u\|,\\
	\label{eq:YI}&(\textnormal{Young's inequality})   &(\forall (a,b)\in 
	\R^2)(
	\forall \epsilon 
	>0) \quad 2ab \leq 
	\epsilon a^2+\frac{b^2}{\epsilon}.
\end{eqnarray} 	
Let $D\subset \H$, $T\colon D \rightarrow 
\H$, and $\beta \in \left]0,+\infty\right[$. The operator $T$ is 
$\beta$-cocoercive if 
\begin{equation} \label{def:coco}
	(\forall (x,y) \in D^2) \quad \langle x-y \mid 
	Tx-Ty 
	\rangle 
	\geq \beta \|Tx - Ty 
	\|^2.
\end{equation}
We say that $T$ is $\beta$-Lipschitz continuous if 
\begin{equation} \label{def:lips}
	(\forall x \in D) (\forall y \in D)\quad \|Tx-Ty\| \leq  
	\beta\|x - y \|.
\end{equation}	
Note that if $T$ is $\beta$-cocoercive it is $(1/\beta)$-Lipschitz continuous. The operator $T$ is continuous on $D$ if for any $u \in D$ and any sequence $(u_n)_{n \in \N}$ in $D$ such that $\|u_n - u\| \to 0$, one has $\|Tu_n - Tu\| \to 0$. Moreover, the operator $T$ is said to be uniformly continuous on $D$ if for any two sequences $(u_n)_{n \in \N}$ and $(v_n)_{n \in \N}$ in $D$ such that $\|u_n - v_n\| \to 0$, one has $\|Tu_n - Tv_n\| \to 0$.

We denote the power set of $\H$ by $2^\H$. Let $A\colon\H \rightarrow 2^{\H}$ be a set-valued operator.
The domain, range, zeros, and graph of $A$ 
are defined, respectively, by:
\begin{align*}
	&\dom\, A = \menge{x \in \H}{Ax \neq  \varnothing}, \quad 
	\ran\, A = \menge{u \in \H}{(\exists x \in \H)\,\, u \in Ax}\\
	&\zer A = 	\menge{x \in \H}{0 \in Ax}, \quad
	\gra A = \menge{(x,u) \in \H \times \H}{u \in Ax}.
\end{align*}
The inverse of $A$ is defined by
$A^{-1}\colon\H \rightarrow 2^{\H} \colon u \mapsto \menge{x \in \H}{u \in Ax}$.
The operator $A$ is called monotone if 
\begin{equation}\label{eq:defrhomon}
	\left(\forall \big((x,u),(y,v)\big) \in (\gra A)^2\right) 
	\quad 
	\scal{x-y}{u-v} 
	\geq 0.
\end{equation}
Moreover, $A$ is maximally monotone if it is 
monotone and its graph is 
maximal in the sense of 
inclusions among the graphs of monotone operators. We say that $A$ is locally bounded at $x \in \H$, if there exists $\delta \in \RPP$ such that $A(\mathcal {B}(x,\delta))$ is bounded. Given $D\subset \H$ nonempty, $A$ is locally bounded at $D$ if, for every $x \in D$, $A$ is locally bounded at $x$. Let $A \colon \H \to 2
^\H$ be a maximally monotone operator, then, $\overline{\dom} A$ is a convex subset of $\H$. Moreover, $A^{-1}$ is also a maximally monotone operator. The resolvent of $A$ is the single valued operator
defined by $J_A\colon\H \rightarrow 2^{\H} \colon x \mapsto (\id+A)^{-1}x$. 

We denote by $\Gamma_0(\H)$ the class of proper lower 
semicontinuous convex functions $f\colon\H\to\RX$. Let 
$f\in\Gamma_0(\H)$. Given $t \in \R$, the level set of $f$ at $t$ is denoted by $\textnormal{lev}_{\leq t} f$. The Fenchel conjugate of $f$ is 
defined by 
\begin{equation*}
    f^*\colon u\mapsto \sup_{x\in\H}(\scal{x}{u}-f(x)).
\end{equation*}
We have
$f^*\in \Gamma_0(\H)$. The subdifferential of $f$ is the set valued operator defined by
\begin{equation*}
    \partial f\colon x\mapsto \menge{u\in\H}{(\forall y\in\H)\:\: 
	f(x)+\scal{y-x}{u}\le f(y)}.
\end{equation*}
We have that $\partial f$ is a maximally monotone operator,
$(\partial f)^{-1}=\partial f^*$,
and that $\zer\,\partial f$ is the set of 
minimizers of $f$, which is denoted by $\argmin_{x\in \H}f$. 	
The proximity operator of $f$ is defined by
\begin{equation*}
	\prox_{f}\colon 
	x\mapsto\argmin_{y\in\H}\left(f(y)+\frac{1}{2}\|x-y\|^2\right)
    \end{equation*}
and we have $\prox_f=J_{\partial f}$. For a further background on monotone operators and convex analysis, the reader is referred to \cite{bauschkebook2017}. We conclude this section with the following lemma.
    
\begin{lem}[Lemma~3.2~\cite{BricenoRoldan4op}]\label{lemma}
Let $A\colon \H \to 2^\H$ be a maximally monotone operator, let $B\colon \H \to \H$ be an operator such that $\dom A \subset \dom B$ and $B$ is continuous in $\dom A$. Moreover, let $z \in \dom A$,  let $y$ in 
$\H$, and define 
\begin{equation}\label{eq:defxzy}
(\forall \gamma > 0) \quad \quad	x_{z,y}(\gamma) = 
J_{\gamma A} 
( z - 
\gamma y). 
\end{equation}
Then, the following assertions hold
\begin{enumerate}
    \item\label{lemmai} The function 
    $$\gamma \mapsto \frac{1}{\gamma} \|z- x_{z,y}(\gamma)\|$$
    is nonincreasing.
    \item\label{lemmaii} For every 
$\theta \in \left]0,1\right[$, there exists $\gamma(z) >0$ 
such 
that, for 
every  $\gamma \in \left]0, \gamma(z)\right]$,
\begin{equation}\label{eq:destheta}
\gamma \| Bz - B x_{z,y} (\gamma) \| \leq \theta \|z 
- x_{z,y} 
(\gamma)\|.
\end{equation}
\end{enumerate}
\end{lem}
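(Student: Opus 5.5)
We prove the two assertions separately. Throughout, fix $z\in\dom A$ and $y\in\H$, and write $x(\gamma)=x_{z,y}(\gamma)$. By definition of the resolvent, $x(\gamma)=J_{\gamma A}(z-\gamma y)$ is equivalent to
\begin{equation*}
\frac{z-x(\gamma)}{\gamma}-y\in A x(\gamma).
\end{equation*}

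For \ref{lemmai}, take $0<\gamma_1<\gamma_2$. Write $x_i=x(\gamma_i)$, $d_i=z-x_i$ and $u_i=d_i/\gamma_i-y\in Ax_i$. Monotonicity of $A$ gives $\scal{x_1-x_2}{u_1-u_2}\ge 0$. Since $x_1-x_2=d_2-d_1$ and the $y$ terms cancel, this reads
\begin{equation*}
\scal{d_2-d_1}{d_1/\gamma_1-d_2/\gamma_2}\ge 0.
\end{equation*}
Expanding gives
\begin{equation*}
\Big(\frac1{\gamma_1}+\frac1{\gamma_2}\Big)\scal{d_1}{d_2}\ \ge\ \frac{\|d_1\|^2}{\gamma_1}+\frac{\|d_2\|^2}{\gamma_2}.
\end{equation*}
Bounding $\scal{d_1}{d_2}\le \|d_1\|\|d_2\|$ by Cauchy–Schwarz and setting $a=\|d_1\|$, $b=\|d_2\|$, we get $(b-a)(a/\gamma_1-b/\gamma_2)\ge 0$. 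If $b<a$, then $b/\gamma_2<a/\gamma_2<a/\gamma_1$, which is the claim. Otherwise $b\ge a$. If $b>a$, the factor $b-a$ is positive, so $a/\gamma_1\ge b/\gamma_2$. If $b=a$, the claim follows directly from $\gamma_1<\gamma_2$. In every case $\|d_2\|/\gamma_2\le\|d_1\|/\gamma_1$. This is the standard argument (cf. the corresponding result in \cite{bauschkebook2017}).

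For \ref{lemmaii}, argue by contradiction. Suppose that for some $\theta\in\left]0,1\right[$ and every $\gamma(z)>0$ the inequality fails for some $\gamma\le\gamma(z)$. Then there is a sequence $\gamma_n\downarrow 0$ with
\begin{equation*}
\gamma_n\|Bz-Bx(\gamma_n)\|>\theta\|z-x(\gamma_n)\|.
\end{equation*}
In particular $x(\gamma_n)\ne z$. The key step is to show $x(\gamma_n)\to z$.
\begin{itemize}
\item Since $z\in\dom A$ and $A$ is maximally monotone, $J_{\gamma A}z\to P_{\overline{\dom} A}z=z$ as $\gamma\to0$.
\item Nonexpansiveness of the resolvent gives $\|x(\gamma)-J_{\gamma A}z\|\le\gamma\|y\|\to 0$.
\end{itemize}
Hence $x(\gamma_n)\to z$. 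Moreover $x(\gamma_n)\in\dom A$, and $B$ is continuous on $\dom A$, so $\|Bz-Bx(\gamma_n)\|\to0$.

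This alone does not contradict the failed inequality, since both sides tend to zero. The main obstacle is to compare the rates. We use \ref{lemmai}: since $\gamma\mapsto\|z-x(\gamma)\|/\gamma$ is nonincreasing, the quantity $\|z-x(\gamma_n)\|/\gamma_n$ is nondecreasing as $\gamma_n\downarrow0$. Because $x(\gamma_n)\ne z$, this quantity is strictly positive, so it is bounded below by some $c>0$ for all large $n$.

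Dividing the failed inequality by $\gamma_n$ gives
\begin{equation*}
\|Bz-Bx(\gamma_n)\|>\theta\,\frac{\|z-x(\gamma_n)\|}{\gamma_n}\ge\theta c>0.
\end{equation*}
This contradicts $\|Bz-Bx(\gamma_n)\|\to0$. Therefore some $\gamma(z)>0$ works for all $\gamma\in\left]0,\gamma(z)\right]$.

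The only delicate point is the degenerate case where $x(\gamma)=z$ for some small $\gamma$. There the inequality holds trivially, since both sides are zero. Indeed, for a failing $\gamma$ we must have $x(\gamma)\neq z$, because otherwise both sides vanish and the inequality holds. So the sequence in the contradiction argument can always be chosen with $x(\gamma_n)\ne z$.
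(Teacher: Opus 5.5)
Your proof is correct and is the standard argument. For (i) you use the monotonicity of $A$ at the points $\frac{z-x_{z,y}(\gamma_i)}{\gamma_i}-y\in Ax_{z,y}(\gamma_i)$; for (ii) you combine $x_{z,y}(\gamma)\to z$, the continuity of $B$ on $\dom A$, and the positive lower bound on $\|z-x_{z,y}(\gamma)\|/\gamma$ that (i) supplies. The paper does not reprove the lemma but imports it from \cite[Lemma~3.2]{BricenoRoldan4op}, and your argument follows the same approach as that source, with the contradiction in (ii) being a repackaging of the direct estimate.
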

\section{Problem Formulation}\label{sec:probformluation}
In this section we present the main problem and a review of some existing methods in the literature to solve it. We aim at solving the following monotone inclusion problem.
\begin{pro}\label{prob:problemMAIN}
Let $\H$ be a real Hilbert space, let $A: \H \to 2^\H$ be a maximally monotone operator, and let $B : \H \to 2^\H$ be a 
maximally 
monotone operator. Suppose that $\overline{\dom} A \subset \dom B$, that $B$ is single valued and continuous in $\overline{\dom} A$, and that $A+B$ is maximally monotone. The problem is to 
\begin{equation}\label{eq:problem1}
\text{find} \quad x \in \H \quad \text{such that} \quad 0 \in 
Ax+Bx,
\end{equation}
under the assumption that the set of solutions to 
\eqref{prob:problemMAIN} is nonempty.
\end{pro}
This problem can be solved by the Tseng's forward-backward-forward (FBF) algorithm \cite{Tseng2000SIAM}. Given $x_0 \in \H$ and a nonnegative sequence $(\gamma_n)_{n \in \N}$, FBF iterates as follows 
\begin{equation}
			\label{eq:algoFBF}
			(\forall n \in \N)\quad \left\lfloor
			\begin{aligned}
                &z_n = J_{\gamma_n A}(x_n-\gamma_nBx_n)\\
                &x_{n+1} = z_n-\gamma_n (Bz_n -Bx_n).			\end{aligned}
			\right.
\end{equation} 
The sequence $(\gamma_n)_{n \in \N}$ is chosen according to a subroutine called {\it linesearch}. For each $n \in \N$, this subroutine is defined as follows: given $\tau\in \RPP$, $\sigma \in~]0,1[$, and $\theta \in~]0,1[$, $\gamma_n \in \RPP$ is the largest value in $\{\tau, \tau\sigma, \tau \sigma^2, \tau \sigma^3,\ldots  \}$ such that
\begin{equation}\label{eq:linesearchFBF}
    \gamma \|Bz(\gamma) - Bx_n \| \leq \theta \|z(\gamma)-x_n\|,
\end{equation}
where
\begin{equation}
    z(\gamma) = J_{\gamma A}(x_n-\gamma Bx_n).
\end{equation}
In the case where $B$ is $\beta$-Lipschitz continuous, one can simply choose $\gamma_n \equiv \gamma \in ]0,1/\beta[$. The FBF algorithm was extended in \cite{BricenoDavis2018} and \cite{BricenoRoldan4op} for solving inclusions involving not only maximally monotone operators, but also cocoercive and Lipschitz. 
Note that FBF requires two activations of $B$ at each iteration.
If the operator $B$ is cocoercive, Problem~\ref{prob:problemMAIN} can be solved by the forward-backward (FB) algorithm \cite{Chenhg1997,passty1979JMAA}. If $B$ is $\beta$-Lipschitz continuous, methods requiring only one activation of $B$ per iteration were presented in \cite{Cevher2020SVVA,Csetnek2019AMO,Malitsky2020SIAMJO}. Particularly, the authors in \cite{Malitsky2020SIAMJO} proposed the method called forward-reflected-backward (FRB) which, for $(x_0,x_{-1}) \in \H^2$ and a sequence of step-sizes $(\gamma_n)_{n \in \N}$, iterates as follows:
\begin{equation}
			\label{eq:algoMTintro}
			(\forall n \in \N)\quad \left\lfloor
			\begin{aligned}
                &v_n = Bx_n -Bx_{n-1}\\
                &x_{n+1} = J_{\gamma_n A}(x_n-\gamma_n Bx_n -\gamma_{n-1}v_n).			\end{aligned}
			\right.
\end{equation} 
The weak convergence of FRB to a solution to Problem~\ref{prob:problemMAIN} is ensured when $\gamma \in ]0,1/(2\beta)[$.
In addition, a linesearch version of FRB is also proposed in \cite{Malitsky2020SIAMJO}, whose convergence is guaranteed in the case that $B$ is locally Lipschitz. This linesearch is similar to those proposed by Tseng: $\gamma_n \in \RPP$ is the largest value in $\{ \gamma_{n-1}, \gamma_{n-1}\sigma, \gamma_{n-1} \sigma^2,\ldots  \}$ such that
\begin{equation}\label{eq:linesearchFRB}
    \gamma \|Bx(\gamma) - Bx_n \| \leq \frac{\theta}{2} \|x(\gamma)-x_n\|,
\end{equation}
where
\begin{equation}
    x(\gamma) = J_{\gamma A}(x_n-\gamma Bx_n-\gamma_{n-1}v_n).
\end{equation}
Note that, the convergence of FRB has not been proved in the case where $B$ is merely continuous. The following example illustrates that the proposed linesearch for FRB may never stop. This example was generated with the assistance of Google's Gemini AI (Gemini 1.5 Pro).
\begin{example}\label{example1} In the context of Problem~\ref{prob:problemMAIN} set  $\mathcal{H} = \mathbb{R}^2$, let $C \subset \R^2$ given by
	\begin{equation}
    C = \menge{(x,y) \in \mathbb{R}^2}{y \geq |x|^{3/2}},
	\end{equation}
set $A = \partial \delta_C$, and let $B \colon \mathbb{R}^2 \to \mathbb{R}^2$ be the operator defined by:
	\begin{equation}
		B \colon \R^2 \to \R^2 \colon (x,y)  \mapsto \left( \operatorname{sgn}(x)|x|^{1/3} + 1, y \right).
	\end{equation}
	Since $t \mapsto \operatorname{sgn}(t)|t|^{1/3}$ is strictly increasing and continuous, $B$ is maximally monotone. However, $B$ is not locally Lipschitz at $(0,0)$. In this case, \eqref{eq:problem1} corresponds to
    \begin{equation}\label{eq:incluexample}
        \text{find} \quad (x,y) \in \R^2 \quad \text{such that} \quad (0,0) \in 
N_{\textnormal{lev}_{\leq 0}  f}(x,y)+\left( \operatorname{sgn}(x)|x|^{1/3} + 1, y \right),
    \end{equation}
    where $f \colon (x,y) \mapsto  |x|^{3/2}-y$. Moreover, we have $(u,v) \in N_{{\textnormal{lev}_{\leq 0}  f}}(x,y)$ if and only if, there exists $\lambda \in \RPP$, 
\begin{equation}\label{eq:NCexample}
     (u,v) =\lambda\begin{cases}
     (\frac{3}{2}\operatorname{sgn}(x)|x|^{1/2},-1), &\textnormal{ if } y = |x|^{3/2} \\
     (0,0), &\textnormal{ if } y > |x|^{3/2}.
    \end{cases}
\end{equation}
    Therefore, \eqref{eq:incluexample} reduces to finding $(x,y) \in \R^2$ such that 
    $y = |x|^{3/2}$ and
        \begin{equation}\label{eq:incluexample2}
        0  =3\operatorname{sgn}(x)|x|^2+2\operatorname{sgn}(x)|x|^{1/3} +2.
    \end{equation}
    Note that \eqref{eq:incluexample2} has no solution if $x \in \RP$. Otherwise, if $x \in \RMM$, \eqref{eq:incluexample2} is equivalent to $0=3|x|^2+2|x|^
    {1/3}-2$ which has a unique solution for $x \in \RMM$. We conclude that, the problem has a unique solution.

    Consider now the following initialization for FRB:
	\begin{equation*}
		x_{-1} = (1, -1), \quad x_0 = (0, -1), \quad \text{and} \quad \gamma_{-1} = \gamma_0 = 1.
	\end{equation*}
	For $n=0$ we have
	\begin{align*}
		v_0 &= Bx_0 - Bx_{-1} = (1, -1) - (2, -1) = (-1, 0), \\
		z_0 &= x_0 - \gamma_0 Bx_0 - \gamma_{-1}v_0 = (0, -1) - (1, -1) - (-1, 0) = (0, 0).
	\end{align*}
    In addition, the next iterate is $x_1 = P_C(z_0)$. Since $(0,0) \in C$, we obtain 
	\begin{align*}
    x_1 &= (0,0)\\
		v_1 &= Bx_1 - Bx_0 = (1, 0) - (1, -1) = (0, 1).
	\end{align*}
	Given $\gamma> 0$ we have
	\begin{equation}
		z(\gamma):=x_1 - \gamma Bx_1 - \gamma_0 v_1 = (0,0) - \gamma(1,0) - (0,1) = (-\gamma, -1).
	\end{equation}
	Let $x(\gamma) = P_C(z(\gamma))$. For $\gamma > 0$, the projection lies on the boundary of $C$ and we can parameterize this exact projection for $t > 0$ as $x_t = (-t^2, t^3)$. Then,  $x(\gamma) = P_C(z(\gamma))$ is equivalent to $z(\gamma) - x_t \in N_C(x_t)$, which according \eqref{eq:NCexample}, reduces to
	\begin{equation}
		(-\gamma + t^2, -1 - t^3) = -\lambda \left( \frac{3}{2}t, \; 1 \right)
	\end{equation}
    for some $\lambda \in \RPP$. Note that $\lambda = (1 + t^3)$, thus, this system can be reduced to
    	\begin{equation}\label{eq:gamma_v}
		\gamma = \frac{3}{2}t^4 + t^2 + \frac{3}{2}t.
	\end{equation}
	Since $\phi \colon t \mapsto \frac{3}{2}t^4 + t^2 + \frac{3}{2}t$ is strictly increasing for $t > 0$ and $\phi(0) = 0$, we will study the linesearch in \eqref{eq:linesearchFRB} for    
    $\gamma(t):= \phi (t)$. Let us now evaluate the terms involved in the linesearch: 
	\begin{align*}
		\|x_{\gamma(t)} - x_1\| &= \sqrt{(-t^2)^2 + (t^3)^2} = t^2 \sqrt{1 + t^2},\\
		\|Bx_{\gamma(t)} - Bx_1\| &= \left\| \left(-t^{2/3} + 1, t^3\right) - (1, 0) \right\| = t^{2/3} \sqrt{1 + t^{14/3}}.
	\end{align*}
	Therefore, the linesearch condition is
	\begin{equation}
		\left( \frac{3}{2}t^4 + t^2 + \frac{3}{2}t \right) t^{2/3} \sqrt{1 + t^{14/3}} \leq \frac{\theta}{2} t^2 \sqrt{1 + t^2}.
	\end{equation}
	Dividing both sides by $t^{5/3}$, we obtain:
	\begin{equation}
		\left( \frac{3}{2}t^3 + t + \frac{3}{2} \right) \sqrt{1 + t^{14/3}} \leq \frac{\theta}{2} t^{1/3} \sqrt{1 + t^2}.
	\end{equation}
	Taking $t \downarrow 0$ (corresponding to $\gamma \downarrow 0$), the left-hand side converges to $3/2$ while the right-hand side converges to $0$, showing that the linesearch procedure will never stop in this setting. To illustrate this behavior, Figure~\ref{fig:linesearch} plots the left-hand side (LHS) and right-hand side (RHS) of the linesearch condition \eqref{eq:linesearchFRB} for this example, with parameters $\theta = 0.9$, $\tau=1$, and $\sigma = 0.9$. This plot exhibits the asymptotic behavior derived in our calculations.
    \begin{figure}
        \centering
        \includegraphics[width=0.5\linewidth]{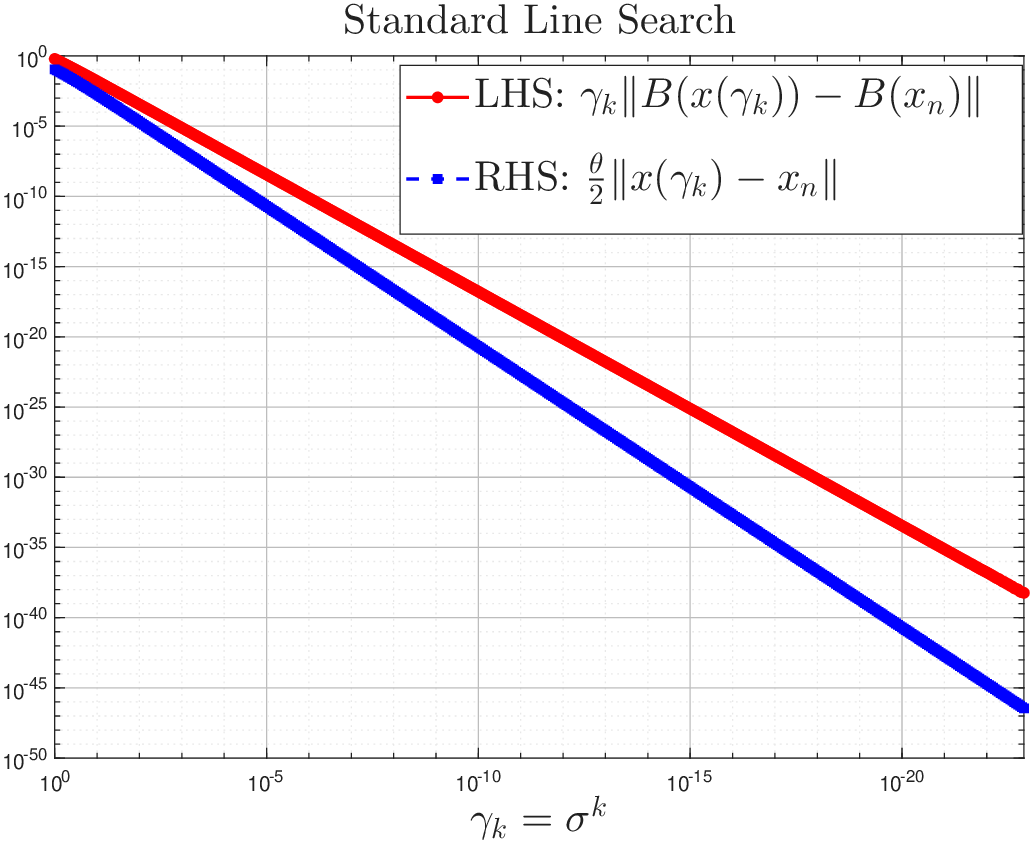}
        \caption{}
        \label{fig:linesearch}
    \end{figure}
\end{example}
\begin{rem} The author in \cite{Tseng2000SIAM} proves that the linesearch in \eqref{eq:linesearchFBF} for FBF is satisfied with $\gamma = \tau$ when $x_n \in \zer(A+B)$ \cite[Theorem~3.4]{Tseng2000SIAM}. In addition, the author shows that if 
\begin{equation}
  (\forall \gamma \in \RPP) \quad  \gamma \|Bz(\gamma) - Bx_n \| > \theta \|z(\gamma)-x_n\|,
\end{equation}
   then necessarily $x_n \in \zer(A+B)$, guaranteeing that the linesearch is well defined and terminates in finitely many steps. The proof relies on fixed-point characterization of the forward-backward operator: $x_n = J_{\gamma A}(x_n-\gamma Bx_n)$ if and only if $x_n \in \zer (A+B)$. On the other hand, the momentum term $v_n$ in FRB breaks this property. If $x_n = J_{\gamma A}(x_n-\gamma Bx_n-\gamma_{n-1}v_n)$, this equality no longer characterizes the elements of $\zer(A+B)$. This structural difference suggests that the term $v_n$ must be explicitly considered in the linesearch condition for FRB, which motivates the approach developed in the next section. Note that this fact was also mentioned in \cite[Concluding remarks]{Malitsky2020SIAMJO} where the auxiliary variable $u_{n+1}=Bx_n$ is needed to interpret FRB as a fixed-point iteration.
\end{rem}
    \section{Main Results}\label{sec:MN}
    In this section we present our main algorithm and the result of convergence.
\begin{algo}\label{alg:algoMT}
		In the context of Problem~\ref{prob:problemMAIN}, let $\gamma_{-1} \in \RPP$, let $(\theta,\sigma) \in ~ ]0,1[^2$, let $\tau \in \RPP$, let $(\alpha_k)_{k \in \N}$ be defined by $\alpha_k =\tau \sigma^k$, and let $(x_0,x_{-1}) \in (\dom A)^2$. Define $v_0 = Bx_0-Bx_{-1}$ and consider the sequence defined recursively by
		\begin{equation}
			\label{eq:algoMT}
			(\forall n \in \N)\quad \left\lfloor
			\begin{aligned}
                & v_{n}  = Bx_{n} - B x_{n-1}\\
                &x_{n+1} = J_{\gamma_n A} \left( x_n -\gamma_n B x_n -\gamma_{n-1}v_n\right),
			\end{aligned}
			\right.
		\end{equation} 
        where, for each $n \in \N$, $\gamma_n = \alpha_k$ where $k$ is the smallest natural number such that 
        \begin{equation}\label{eq:linesearch}
            \gamma_{n}\|v_{n+1}\| \leq \theta (\|x_{n+1}-x_n\|+\gamma_{n-1}\|v_n\|).
        \end{equation}
	\end{algo}	
The following proposition guarantees that Algorithm~\ref{alg:algoMT} is well defined, that is, the linesearch procedure finishes in a finite number of steps.
\begin{prop}\label{prop:Glinesearch}
In the context of Problem~\ref{prob:problemMAIN}, let $(v_n,x_n)_{n \in \N}$ be generated by Algorithm~\ref{alg:algoMT}. Then, for each $n \in \N$, there exists $k_n \in \N$ such that \eqref{eq:linesearch} holds for $\gamma_n = \alpha_{k_n}$.
\end{prop}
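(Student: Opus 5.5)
The plan is to fix $n\in\N$ and treat $x_n$, $x_{n-1}$, $\gamma_{n-1}$ and $v_n=Bx_n-Bx_{n-1}$ as given data; this is legitimate by induction on $n$, the case $n=0$ using $x_0,x_{-1}\in\dom A$ and $v_0$ as defined in the algorithm. For $\gamma>0$ set
\begin{equation*}
x(\gamma)=J_{\gamma A}\big(x_n-\gamma Bx_n-\gamma_{n-1}v_n\big).
\end{equation*}
Then \eqref{eq:linesearch} with $\gamma_n=\gamma$ reads
\begin{equation*}
\gamma\|Bx(\gamma)-Bx_n\|\leq \theta\big(\|x(\gamma)-x_n\|+\gamma_{n-1}\|v_n\|\big).
\end{equation*}
Since $\alpha_k=\tau\sigma^k\downarrow 0$, it suffices to show that this inequality holds for every sufficiently small $\gamma>0$. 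Note also that $x_n\in\dom A$ for every $n$, since it is either an initial point or a value of a resolvent. I would split into two cases according to whether $v_n=0$.

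\emph{Case $v_n=0$.} Here $x(\gamma)=J_{\gamma A}(x_n-\gamma Bx_n)=x_{z,y}(\gamma)$ in the notation of Lemma~\ref{lemma}, with $z=x_n\in\dom A$ and $y=Bx_n$. The required inequality becomes $\gamma\|Bx_n-Bx(\gamma)\|\leq\theta\|x_n-x(\gamma)\|$, which is exactly \eqref{eq:destheta}. Lemma~\ref{lemma}\ref{lemmaii} then gives a $\gamma(x_n)>0$ below which it holds. It remains to take $k_n$ with $\alpha_{k_n}\le\gamma(x_n)$; the smallest index satisfying the inequality is then at most this $k_n$.

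\emph{Case $v_n\neq 0$.} The right-hand side is bounded below by $\theta\gamma_{n-1}\|v_n\|>0$, uniformly in $\gamma$. It therefore suffices to show that $\gamma\|Bx(\gamma)-Bx_n\|\to 0$ as $\gamma\downarrow 0$, i.e.\ that $Bx(\gamma)$ stays bounded. Set $w=x_n-\gamma_{n-1}v_n$. By nonexpansiveness of $J_{\gamma A}$,
\begin{equation*}
\|x(\gamma)-J_{\gamma A}w\|\le\gamma\|Bx_n\|\to 0 .
\end{equation*}
By the classical fact that $J_{\gamma A}w\to P_{\overline{\dom}A}w$ as $\gamma\downarrow 0$ (e.g.\ \cite[Theorem~23.48]{bauschkebook2017}; it uses convexity of $\overline{\dom}A$), we get $x(\gamma)\to p:=P_{\overline{\dom}A}w\in\overline{\dom}A\subset\dom B$. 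Since $x(\gamma)\in\dom A$ and $B$ is continuous on $\overline{\dom}A$, we get $Bx(\gamma)\to Bp$, so the family is bounded for small $\gamma$. Hence the left-hand side tends to $0$ and eventually falls below $\theta\gamma_{n-1}\|v_n\|$.

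The main obstacle is this second case: we cannot appeal to Lemma~\ref{lemma}, because the shift $-\gamma_{n-1}v_n$ does not scale with $\gamma$, so $x(\gamma)$ does not tend to $x_n$. The fix is to exploit the extra term $\gamma_{n-1}\|v_n\|$ built into the new linesearch. That in turn requires the limit behaviour of the resolvent as $\gamma\downarrow 0$, and it is precisely here that the hypothesis $\overline{\dom}A\subset\dom B$ together with continuity of $B$ on the closure is used.
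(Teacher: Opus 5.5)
Your proposal is correct and follows the paper's proof essentially step for step. It uses the same split: for $v_n=0$, Lemma~\ref{lemma}\ref{lemmaii} with $z=x_n$, $y=Bx_n$; for $v_n\neq 0$, convergence of the resolvent to $P_{\overline{\dom}A}w_n$ via \cite[Theorem~23.48]{bauschkebook2017}, boundedness of $Bx(\gamma)$, and the positive floor $\theta\gamma_{n-1}\|v_n\|$. Your nonexpansiveness estimate $\|x(\gamma)-J_{\gamma A}w\|\le\gamma\|Bx_n\|$ supplies a detail the paper skips. The cited theorem concerns a fixed argument, while in the paper $z_k$ is evaluated at $w_n-\alpha_kBx_n$, which varies with $k$.
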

\begin{proof} Fix $n \in \N$, set $w_n = x_n-\gamma_{n-1}v_n$, and, for each $k \in \N$, define
\begin{equation}\label{eq:linesearchprop1}
   			\begin{aligned}
                & z_k =  J_{\alpha_k A} \left( w_n -\alpha_k B x_n\right), \\
                & p_k = Bz_k - B x_n.        
			\end{aligned}
\end{equation}
We will prove that there exists $k \in \N$ such that
\begin{equation}\label{eq:linesearchprop2}
  (\forall k \geq k_n) \quad \alpha_k\| p_k \| \leq \theta (\|z_k-x_n\|+\gamma_{n-1}\|v_n\|).
 \end{equation}
First, note that, for each $k \in \N$, $z_k \in \dom A \subset \dom B$. In the case where $v_n = 0$, the result follows from Lemma~\ref{lemma}~\ref{lemmaii} applied to $z = x_n$ and $y = Bx_n$. Suppose now that $v_n \neq 0$. Since $\alpha_k \to 0$, we have that $z_k \to \widetilde{z}:=P_{\overline{\dom} A }w_n$
\cite[Theorem~23.48]{bauschkebook2017}. Since $B$ is continuous in $\overline{\dom} A$, it is locally bounded on $\overline{\dom} A$, thus, the sequence $(Bz_k)_{k \in \N}$ is bounded. Therefore, $\alpha_k \to 0$ and $p_k = Bz_k - Bx_n$ imply that $\alpha_k \|p_k\| \to 0$.  The result follows from the fact that $\theta \gamma_{n-1} \|v_n\| >0$.
\end{proof}
\begin{prop}\label{prop:MTFejer} In the context of Problem~\ref{prob:problemMAIN}, let $(v_n,x_n)_{n \in \N}$ be generated by Algorithm~\ref{alg:algoMT}. In addition, let $\widehat x \in \zer(A+B)$ and define, for each $n \in \N$,
\begin{equation}\label{eq:defGamma}
    \Gamma_n(\widehat{x}) = \|x_{n}-\widehat{x}\|^2 - 2\gamma_{n-1}\scal{x_n-\widehat{x}}{v_n}+\frac{\gamma_{n-1}^2}
    {\rho^2} \|v_n\|^2.
\end{equation}
Then, the following assertions hold:
\begin{enumerate}
    \item\label{prop:MTFejer1} For every $n \in \N$
        \begin{equation}\label{eq:desGamma}
         \Gamma_{n+1}(\widehat{x}) \leq \Gamma_{n}(\widehat{x}) -\left(1-\rho -\left(1+\frac{1}{\rho^2}\right)\theta^2\right)
     \left(\|x_{n+1}-x_n\|^2+\frac{\gamma_{n-1}^2}{\rho^2}\|v_{n}\|^2\right).
        \end{equation}
    \item \label{prop:MTFejer2} For every $n \in \N$
    \begin{equation}\label{eq:desGammabel} \Gamma_{n}(\widehat x) \geq \left(1-\rho^2\right) \|x_n-\widehat{x}\|^2.
    \end{equation}
    \end{enumerate}
\end{prop}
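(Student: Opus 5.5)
The proof of \ref{prop:MTFejer1} follows the classical Lyapunov argument for FRB, with the linesearch \eqref{eq:linesearch} taking the place of the Lipschitz bound. Throughout I take $\rho\in\left]0,1\right[$ to be the auxiliary parameter appearing in \eqref{eq:defGamma}. I write $a_n=\|x_{n+1}-x_n\|$ and $c_n=\gamma_{n-1}\|v_n\|$.

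First, I would read \eqref{eq:algoMT} through the resolvent:
\begin{equation*}
x_n-\gamma_nBx_n-\gamma_{n-1}v_n-x_{n+1}\in\gamma_nAx_{n+1}.
\end{equation*}
Since $\widehat x\in\zer(A+B)$, we also have $-\gamma_nB\widehat x\in\gamma_nA\widehat x$. Monotonicity of $A$ then gives
\begin{equation*}
\scal{x_n-x_{n+1}-\gamma_n(Bx_n-B\widehat x)-\gamma_{n-1}v_n}{x_{n+1}-\widehat x}\ge 0.
\end{equation*}
Next I substitute $Bx_n=Bx_{n+1}-v_{n+1}$ and drop $\gamma_n\scal{Bx_{n+1}-B\widehat x}{x_{n+1}-\widehat x}\ge0$, which holds by monotonicity of $B$. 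I also split $\scal{v_n}{x_{n+1}-\widehat x}=\scal{v_n}{x_n-\widehat x}+\scal{v_n}{x_{n+1}-x_n}$. Finally, I apply the identity $2\scal{x_n-x_{n+1}}{x_{n+1}-\widehat x}=\|x_n-\widehat x\|^2-\|x_{n+1}-\widehat x\|^2-a_n^2$ together with Cauchy--Schwarz \eqref{eq:CS}. This yields
\begin{equation*}
\|x_{n+1}-\widehat x\|^2-2\gamma_n\scal{x_{n+1}-\widehat x}{v_{n+1}}\le\|x_n-\widehat x\|^2-2\gamma_{n-1}\scal{x_n-\widehat x}{v_n}-a_n^2+2a_nc_n.
\end{equation*}
Adding $\gamma_n^2\|v_{n+1}\|^2/\rho^2$ to both sides turns the left-hand side into $\Gamma_{n+1}(\widehat x)$. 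To make the right-hand side read as $\Gamma_n(\widehat x)$, I add and subtract $c_n^2/\rho^2$.

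The key step is bounding the two leftover terms so that the constant comes out exactly as in \eqref{eq:desGamma}.
\begin{itemize}
\item For the cross term, Young's inequality \eqref{eq:YI} with $\epsilon=\rho$ gives $2a_nc_n\le\rho a_n^2+c_n^2/\rho$. Since $c_n^2/\rho=\rho\, c_n^2/\rho^2$, this combines with $-a_n^2-c_n^2/\rho^2$ to give exactly $-(1-\rho)\bigl(a_n^2+c_n^2/\rho^2\bigr)$.
\item For the new term, the linesearch \eqref{eq:linesearch} gives $\gamma_n^2\|v_{n+1}\|^2\le\theta^2(a_n+c_n)^2$. Young's inequality with $\epsilon=\rho^2$ gives $(a_n+c_n)^2\le(1+\rho^2)a_n^2+(1+1/\rho^2)c_n^2$. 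Dividing by $\rho^2$, I obtain
\begin{equation*}
\frac{\gamma_n^2}{\rho^2}\|v_{n+1}\|^2\le\theta^2\Bigl(1+\frac{1}{\rho^2}\Bigr)\Bigl(a_n^2+\frac{c_n^2}{\rho^2}\Bigr).
\end{equation*}
\end{itemize}
Summing these two bounds gives \eqref{eq:desGamma}. The main obstacle is choosing these Young weights ($\rho$ for the cross term and $\rho^2$ for the linesearch square) correctly. With other choices the coefficient does not factor as $1-\rho-(1+\rho^{-2})\theta^2$ multiplying $\|x_{n+1}-x_n\|^2+\gamma_{n-1}^2\rho^{-2}\|v_n\|^2$.

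For \ref{prop:MTFejer2}, I would apply Young's inequality \eqref{eq:YI} once more, together with Cauchy--Schwarz:
\begin{equation*}
2\gamma_{n-1}\scal{x_n-\widehat x}{v_n}\le\rho^2\|x_n-\widehat x\|^2+\frac{\gamma_{n-1}^2}{\rho^2}\|v_n\|^2.
\end{equation*}
Substituting this into \eqref{eq:defGamma} cancels the $\|v_n\|^2$ term and leaves $\Gamma_n(\widehat x)\ge(1-\rho^2)\|x_n-\widehat x\|^2$, which is \eqref{eq:desGammabel}.
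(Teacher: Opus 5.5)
Your proposal is correct and follows essentially the same route as the paper's proof. Both use monotonicity of $A$ at $x_{n+1}$ and $\widehat x$, then monotonicity of $B$ to replace $Bx_n$ by $Bx_{n+1}-v_{n+1}$, then Young's inequality with weight $\rho$ on the cross term and the linesearch \eqref{eq:linesearch} combined with Young's inequality with weight $\rho^2$, and the identical Cauchy--Schwarz/Young bound for part (2). The one implicit ingredient you use without stating it is that the accepted $\gamma_n$ actually satisfies \eqref{eq:linesearch}, which is guaranteed by Proposition~\ref{prop:Glinesearch}.
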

\begin{proof}  \begin{enumerate}
\item First, note that $(\gamma_n)_{n \in \N}$ is well defined in view of Proposition~\ref{prop:Glinesearch}. Now, fix $n \in \N$. It follows from \eqref{eq:algoMT} that
\begin{equation}
    x_n - x_{n+1} -\gamma_{n} B x_n -\gamma_{n-1}v_n \in \gamma_{n} A x_{n+1}.
\end{equation}
Then, since $-\gamma_{n} B\widehat{x} \in \gamma_{n} A \widehat{x}$, the monotonicity of $A$ yields
\begin{align}\label{eq:proofprop11}
   0 \leq  \scal{x_n - x_{n+1}  -\gamma_{n} B x_n -\gamma_{n-1}v_n +\gamma_{n} B\widehat{x} }{x_{n+1} -\widehat x}.
\end{align}
In addition, 
\begin{equation}\label{eq:proofprop12}
     2\scal{x_n - x_{n+1} }{x_{n+1}-\widehat x}  = \|x_n-\widehat x\|^2 - \|x_n - x_{n+1}\|^2-\|x_{n+1}-\widehat x\|^2
\end{equation}
Then, by combining \eqref{eq:proofprop11} and \eqref{eq:proofprop12} we deduce
\begin{align}\label{eq:proofprop13}
    \|x_{n+1}&-\widehat x\|^2 +2\gamma_{n}  \scal{B x_n-B\widehat{x}}{x_{n+1}-\widehat x}\nonumber\\
    &\leq \|x_n-\widehat x\|^2 - 2\gamma_{n-1} \scal{v_n}{x_{n+1}-\widehat x}- \|x_n - x_{n+1}\|^2\nonumber\\
    &= \|x_n-\widehat x\|^2 - 2\gamma_{n-1} \scal{v_n}{x_n-\widehat x} - 2\gamma_{n-1} \scal{v_n}{x_{n+1}-x_n}- \|x_n - x_{n+1}\|^2.
\end{align}
Furthermore, by the monotonicity of $B$ we deduce 
\begin{align*}
    2\gamma_{n}  \scal{B x_n-B\widehat{x}}{x_{n+1}-\widehat x} &= 2\gamma_{n}  \scal{Bx_{n+1}-B\widehat{x}}{x_{n+1}-\widehat x}+2\gamma_{n}  \scal{B x_n-Bx_{n+1}}{x_{n+1}-\widehat x}\\
    &\geq 2\gamma_{n}  \scal{B x_n-Bx_{n+1}}{x_{n+1}-\widehat x}\\
    &=-2\gamma_{n}  \scal{v_{n+1}}{x_{n+1}-\widehat x}
\end{align*}
Hence, from \eqref{eq:proofprop13} we have 
\begin{align}\label{eq:proofprop14}
    \|x_{n+1}&-\widehat x\|^2 -2\gamma_{n}  \scal{v_{n+1}}{x_{n+1}-\widehat x}\nonumber\\
    &\leq \|x_n-\widehat x\|^2 - 2\gamma_{n-1} \scal{v_n}{x_n-\widehat x} - 2\gamma_{n-1} \scal{v_n}{x_{n+1}-x_n}- \|x_n - x_{n+1}\|^2.
\end{align}
Now, by applying Cauchy-Schwarz and Young's inequalities
\begin{align}\label{eq:proofprop15}
  -2\gamma_{n-1} \scal{v_n}{x_{n+1}-x_n} &\leq 2\gamma_{n-1}\|v_n\| \|x_{n+1}-x_n\|\nonumber\\
  &\leq \frac{\gamma_{n-1}^2}{\rho} \|v_n\|^2 + \rho \|x_{n+1}-x_n\|^2.  
\end{align}
Then, by combining \eqref{eq:proofprop14} and \eqref{eq:proofprop15}
\begin{align}\label{eq:proofprop16}
    \|x_{n+1}&-\widehat x\|^2 -2\gamma_{n}  \scal{v_{n+1}}{x_{n+1}-\widehat x}\nonumber\\
    &\leq \|x_n-\widehat x\|^2 - 2\gamma_{n-1} \scal{v_n}{x_n-\widehat x} + \frac{\gamma_{n-1}^2}{\rho} \|v_n\|^2-(1-\rho) \|x_n - x_{n+1}\|^2.
\end{align}
Moreover, by adding $\frac{\gamma_n^2}{\rho^2}\|v_{n+1}\|^2+\frac{\gamma_{n-1}^2}{\rho^2}\|v_{n}\|^2$ in \eqref{eq:proofprop16}, we deduce
\begin{align}\label{eq:proofprop17}
    &\|x_{n+1}-\widehat x\|^2 -2\gamma_{n}  \scal{v_{n+1}}{x_{n+1}-\widehat x}+\frac{\gamma_n^2}{\rho^2}\|v_{n+1}\|^2\nonumber\\
    &\leq \|x_n-\widehat x\|^2 - 2\gamma_{n-1} \scal{v_n}{x_n-\widehat x}+\frac{\gamma_{n-1}^2}{\rho^2}\|v_{n}\|^2- (1-\rho)\|x_n - x_{n+1}\|^2\nonumber\\
    &\hspace{5cm} +\frac{\gamma_n^2}{\rho^2}\|v_{n+1}\|^2-\left(\frac{\gamma_{n-1}^2}{\rho^2}-\frac{\gamma_{n-1}^2}{\rho} \right)\|v_n\|^2.
\end{align}
By \eqref{eq:linesearch} and Young's inequality
\begin{align}\label{eq:proofprop18}
 \frac{\gamma_n^2}{\rho^2}\|v_{n+1}\|^2 &\leq   \frac{\theta^2}{\rho^2}(\|x_{n+1}-x_n\|+\gamma_{n-1}\|v_n\|)^2\nonumber\\
 &\leq   \frac{\theta^2}{\rho^2}\left((1+\rho^2)\|x_{n+1}-x_n\|^2+\gamma_{n-1}^2\left(1+\frac{1}{\rho^2}\right)\|v_n\|^2\right).
\end{align}
Therefore, it follows from \eqref{eq:proofprop17} and \eqref{eq:proofprop18} that
\begin{align*}
    \Gamma_{n+1}(\widehat{x}) \leq \Gamma_{n}(\widehat{x}) -\left(1-\rho -\left(1+\frac{1}{\rho^2}\right)\theta^2\right)
     \left(\|x_n - x_{n+1}\|^2+\frac{\gamma_{n-1}^2}{\rho^2}\|v_{n}\|^2\right).
\end{align*}
The result follows.
\item Note that,
\begin{align*}
    \Gamma_n(\widehat{x}) &= \|x_{n}-\widehat{x}\|^2 - 2\gamma_{n-1}\scal{x_n-\widehat{x}}{v_n}+\frac{\gamma_{n-1}^2}
    {\rho^2} \|v_n\|^2\\
    &\geq \|x_{n}-\widehat{x}\|^2 - \rho^2\|x_n-\widehat{x}\|^2 - \frac{\gamma_{n-1}^2}{\rho^2} \|v_n\|^2 +\frac{\gamma_{n-1}^2}
    {\rho^2} \|v_n\|^2\\
    &= \left(1-\rho^2\right) \|x_n-\widehat{x}\|^2.
\end{align*}
\end{enumerate}
\end{proof}
\begin{rem}\label{rem:rhotheta}
In view of \eqref{eq:desGamma}, for guaranteeing that the sequence $(\Gamma_{n}(\hat{x}))_{n \in \N}$, defined in \eqref{eq:defGamma},
    is not increasing, we need
    \begin{equation}\label{eq:stepsizerem}
        1-\rho - \left(1+\frac{1}{\rho^2}\right)\theta^2 >0.
    \end{equation}
    The function $\rho \mapsto \rho + \left(1+\frac{1}{\rho^2}\right)\theta^2$ is minimized at $\rho = 2^{1/3} \theta^{2/3}$, then by replacing that value in  \eqref{eq:stepsizerem}, we deduce that $(\Gamma_{n}(\hat{x}))_{n \in \N}$ is not increasing if
    \begin{equation*}
        \theta^2+3\left(\frac{\theta}{2}\right)^{\frac{2}{3}}< 1.
    \end{equation*}
    \end{rem}
\begin{teo}\label{teo:convergencia} In the context of Problem~\ref{prob:problemMAIN}, let $(v_n,x_n)_{n \in \N}$ be generated by Algorithm~\ref{alg:algoMT}. Moreover, suppose that
\begin{equation} \label{eq:stepsize}
   \theta^2+3{\left(\frac{\theta}{2}\right)}^{\frac{2}{3}} < 1
\end{equation}
and that one of the following assertions holds:
\begin{enumerate}
    \item\label{teo:convergenciai} $\displaystyle \liminf_{n \to +\infty} \gamma_n = \gamma>0.$
    \item\label{teo:convergenciaii} $B$ is uniformly continuous in any weakly compact subset of $\overline{\dom} A$.
\end{enumerate}
Then $(x_n)_{n \in \N}$ converges weakly to a point in $\zer (A+B)$.
\end{teo}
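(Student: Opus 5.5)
Fix $\rho = 2^{1/3}\theta^{2/3}$, so that by \eqref{eq:stepsize} and Remark~\ref{rem:rhotheta} the constant $\varepsilon := 1-\rho-(1+\rho^{-2})\theta^2$ is positive. Note also that $\rho<1$, since $\theta<1$ and \eqref{eq:stepsize} forces $\rho^3 = 2\theta^2 < 1$. The proof will follow the standard Fejér-type route, using Opial's lemma in its quasi-Fejér variant for the modified energy $\Gamma_n$.

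\textbf{Step 1: Lyapunov consequences.} Proposition~\ref{prop:MTFejer}\ref{prop:MTFejer1} shows that $(\Gamma_n(\widehat x))_{n\in\N}$ is nonincreasing for every $\widehat x\in\zer(A+B)$. By Proposition~\ref{prop:MTFejer}\ref{prop:MTFejer2} it is bounded below by $(1-\rho^2)\|x_n-\widehat x\|^2 \ge 0$, so it converges and $(x_n)_{n\in\N}$ is bounded. Summing \eqref{eq:desGamma} gives
\[
\sum_n \|x_{n+1}-x_n\|^2<\infty \quad\text{and}\quad \sum_n \gamma_{n-1}^2\|v_n\|^2<\infty .
\]
Hence $x_{n+1}-x_n\to0$ and $\gamma_{n-1}v_n\to0$. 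Expanding $\Gamma_n(\widehat x)$, the cross term $2\gamma_{n-1}\scal{x_n-\widehat x}{v_n}$ tends to $0$ because $(x_n)_{n\in\N}$ is bounded. Therefore $\|x_n-\widehat x\|$ converges for every $\widehat x\in\zer(A+B)$. By Opial's lemma, it then remains to show that every weak cluster point of $(x_n)_{n\in\N}$ lies in $\zer(A+B)$.

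\textbf{Step 2: cluster points under \ref{teo:convergenciai}.} From \eqref{eq:algoMT},
\[
u_n := \frac{x_n-x_{n+1}}{\gamma_n} - \frac{\gamma_{n-1}}{\gamma_n}v_n + Bx_{n+1}-Bx_n \in (A+B)x_{n+1}.
\]
Let $x_{n_k+1}\weak \bar x$. If $\liminf\gamma_n\ge\gamma>0$, then $\gamma_n\ge\gamma/2$ for all large $n$, and $\gamma_n\le\tau$ always. The first two terms of $u_n$ tend to $0$ strongly, because $x_{n+1}-x_n\to0$ and $\gamma_{n-1}v_n\to0$. The last term equals $v_{n+1}$, and $\gamma_n\|v_{n+1}\|\to0$ together with $\gamma_n\ge\gamma/2$ gives $v_{n+1}\to0$. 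Thus $u_n\to0$ strongly. Since $A+B$ is maximally monotone, its graph is sequentially closed in the weak--strong topology, so $0\in(A+B)\bar x$.

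\textbf{Step 3: cluster points under \ref{teo:convergenciaii}.} This is the main obstacle, since $\gamma_n$ may tend to $0$ along the subsequence and $v_{n+1}\to 0$ is no longer automatic. The bounded set $\overline{\conv}\{x_n\}$ is weakly compact and contained in $\overline{\dom}A$, so $B$ is uniformly continuous on it; as $x_{n+1}-x_n\to0$, this yields $v_{n+1}=Bx_{n+1}-Bx_n\to0$. The remaining difficulty is the term $\gamma_{n-1}v_n/\gamma_n$ together with $(x_n-x_{n+1})/\gamma_n$ when $\gamma_n\to0$.

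To handle it, I would use the failure of the linesearch at the trial step $\gamma_n/\sigma$ (whenever $\gamma_n<\tau$), in the spirit of Tseng's argument. Let $\tilde z_n = J_{(\gamma_n/\sigma)A}(x_n-(\gamma_n/\sigma)Bx_n-\gamma_{n-1}v_n)$. Failure gives
\[
\frac{\gamma_n}{\sigma}\|B\tilde z_n-Bx_n\| > \theta\bigl(\|\tilde z_n-x_n\|+\gamma_{n-1}\|v_n\|\bigr).
\]
Adapting Lemma~\ref{lemma}\ref{lemmai} to the shifted point $w_n=x_n-\gamma_{n-1}v_n$, together with the nonexpansiveness of resolvents, shows that $\|\tilde z_n-x_n\|\le \sigma^{-1}\|x_{n+1}-x_n\| + O(\gamma_{n-1}\|v_n\|)\to0$. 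Uniform continuity then gives $\|B\tilde z_n-Bx_n\|\to0$, and the failure inequality yields
\[
\frac{\|\tilde z_n-x_n\|+\gamma_{n-1}\|v_n\|}{\gamma_n}\to0 .
\]
Combining this with Lemma~\ref{lemma}\ref{lemmai} relating $x_{n+1}$ to $\tilde z_n$, we obtain $\|x_{n+1}-x_n\|/\gamma_n\to0$ and $\gamma_{n-1}\|v_n\|/\gamma_n\to0$. Hence $u_n\to0$ and, as in Step~2, $\bar x\in\zer(A+B)$. On the subsequence where $\gamma_n=\tau$, Step~2 applies directly. Opial's lemma then concludes the weak convergence of $(x_n)_{n\in\N}$ to a point in $\zer(A+B)$.
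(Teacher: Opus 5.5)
Steps~1 and~2 match the paper's argument. The first part of Step~3 is also fine: you get $v_{n+1}\to0$ globally from uniform continuity on the weakly compact set $\overline{\conv}\{x_n\}\subset\overline{\dom}A$, and you treat the indices with $\gamma_n=\tau$ separately. That is more careful than the paper, which tacitly assumes $\gamma_n/\sigma$ was always a rejected trial.

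The gap is in the last step of Step~3. Because you keep the graph point at $x_{n+1}$, you need $\|x_{n+1}-x_n\|/\gamma_n\to0$. That requires an \emph{upper} bound on $\|w_n-x_{n+1}\|$ in terms of $\|w_n-\tilde z_n\|$. Lemma~\ref{lemma}\ref{lemmai}, applied with $z=w_n$, $y=Bx_n$ and $\gamma_n<\gamma_n/\sigma$, only gives $\sigma\|w_n-\tilde z_n\|\le\|w_n-x_{n+1}\|$. This is the inequality you already used to get $\tilde z_n-x_n\to0$, and it bounds $\|w_n-x_{n+1}\|/\gamma_n$ from \emph{below}. So ``combining with Lemma~\ref{lemma}\ref{lemmai}'' cannot produce $\|x_{n+1}-x_n\|/\gamma_n\to0$, and as written the step does not follow.

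The conclusion is true, but it needs the companion monotonicity: $\gamma\mapsto\|w-J_{\gamma A}(w-\gamma y)\|$ is nondecreasing. More directly, use the resolvent identity. From the definition of $\tilde z_n$ one checks that $\tilde z_n=J_{\gamma_nA}\bigl(\sigma w_n+(1-\sigma)\tilde z_n-\gamma_nBx_n\bigr)$. Nonexpansiveness of $J_{\gamma_nA}$ then gives $\|x_{n+1}-\tilde z_n\|\le(1-\sigma)\|w_n-\tilde z_n\|$, hence $\|x_{n+1}-x_n\|\le(2-\sigma)\bigl(\|\tilde z_n-x_n\|+\gamma_{n-1}\|v_n\|\bigr)$. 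Dividing by $\gamma_n$ and using your limit from the failure inequality finishes the step.

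Alternatively, do what the paper does and place the graph point at the rejected trial point itself:
\[
\frac{\sigma(x_n-\tilde z_n)}{\gamma_n}-\frac{\sigma\gamma_{n-1}}{\gamma_n}v_n+B\tilde z_n-Bx_n\in(A+B)\tilde z_n .
\]
Every term here already tends to $0$ by what you proved. Moreover $\tilde z_{n_k}\weak\bar x$, because $\tilde z_n-x_n\to0$ and $x_{n+1}-x_n\to0$. This route uses only the direction of Lemma~\ref{lemma}\ref{lemmai} that is actually available.

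Finally, name the weakly compact set on which you invoke uniform continuity for $B\tilde z_n-Bx_n$. For example, take $\overline{\conv}\bigl(\{x_n\}\cup\{\tilde z_n\}\bigr)\subset\overline{\dom}A$, which is bounded because $\tilde z_n-x_n\to0$.
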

\begin{proof} First, 
let $\widehat{x} \in \zer(A+B)$, set $\rho = 2^{1/3} \theta^{2/3}$, and let $(\Gamma_{n}(\widehat{x}))
_{n\in\N}$ be the sequence defined in \eqref{eq:defGamma}. In  view of \eqref{eq:desGamma}, \eqref{eq:stepsize}, Remark~\ref{rem:rhotheta}, and \cite[Lemma~5.31]{bauschkebook2017}, it follows that $(\Gamma_n(\widehat{x}))_{n \in \N}$ converges, $\sum_{n \in \N} \| x_{n+1} -x_{n} \|^{2} < +\infty$, and $\sum_{n \in \N} \gamma_n^2\| v_{n+1}\|^{2} < +\infty$. Then, we deduce that $(x_n)_{n \in \N}$ is bounded, $\|x_{n+1} - x_n\|\to 0$, and $\gamma_n\| v_{n+1}\| \to 0$. In addition, from the definition of $\Gamma_{n}(\widehat{x})
$ in \eqref{eq:defGamma}, we conclude that $(\|x_n-\widehat{x}\|)_{n \in  \N}$ also converges.
    \begin{enumerate} \item   Suppose now that $ \liminf_{n \to +\infty} \gamma_n = \gamma >0$. Since $\gamma_n\| v_{n+1}\| \to 0$ we deduce that  $\| v_{n+1}\| \to 0$. Now, let $\overline{x}$ be a weak cluster point of $(x_n)_{n \in \N}$, say $x_{n_k} \weak \overline{x}$. It follows from \eqref{eq:algoMT}  that
    \begin{equation}\label{eq:resolvent}
       (\forall k \in \N) \quad z_{n_k}:=\frac{x_{n_k} - x_{n_k+1}}{\gamma_{n_k}}+ v_{n_k+1} - \frac{\gamma_{n_k-1}}{\gamma_{n_k}}v_{n_k} \in (A+B)x_{n_k+1}
    \end{equation}
    Since $x_n-x_{n+1} \to 0$ and $v_n \to 0$, it follows  that $z_{n_k} \to 0$. Furthermore, since $A+B$ is maximally monotone, $\gra (A+B)$ is weak-strong closed \cite[Proposition~20.38]{bauschkebook2017}. Therefore, from \eqref{eq:resolvent}, we deduce that $0 \in ( A + B ) \overline{x}$. The result follows from \cite[Lemma~2.47]{bauschkebook2017}.
    \item Without loss of generality, suppose that $\liminf_{n \to +\infty} \gamma_n = 0$. Define, for each $n \in \N$, 
    \begin{equation}\label{eq:defwx}
        \widehat{\gamma}_n = \frac{\gamma_n}{\sigma} \quad \textnormal{ and } \quad \widehat{x}_{n+1}  = J_{\widehat{\gamma}_n  A} (x_n - \gamma_{n-1} v_n - \widehat{\gamma}_n Bx_n).
    \end{equation}
    Then, according Algorithm~\ref{alg:algoMT}, $\widehat{\gamma}_n$ was rejected in the linesearch step and we have 
    \begin{equation}\label{eq:gammarec}
        \|B\widehat{x}_{n+1} - Bx_{n}\| > \frac{\theta}{\widehat{\gamma}_n}( \|\widehat{x}_{n+1} - x_{n}\|+\gamma_{n-1}\|v_n\|).
    \end{equation}
    Now, since $\widehat{\gamma}_n  > \gamma_n$, the nonincreasing property of $\gamma \mapsto \frac{1}{\gamma} \|z- J_{\gamma A}(z-\gamma y)\|$ provided by Lemma~\ref{lemma} yields
    \begin{align*}
        \frac{\|x_n - \gamma_{n-1} v_n -\widehat{x}_{n+1}\|}{\widehat{\gamma}_n} \leq \frac{\|x_n - \gamma_{n-1}v_n  - x_{n+1}\|}{\gamma_n},
    \end{align*}
    which is equivalent to,
    \begin{align}\label{eq:noninc1}
        \sigma \|x_n - \gamma_{n-1} v_n - \widehat{x}_{n+1}\| \leq \|x_n - \gamma_{n-1} v_n - x_{n+1}\|.
    \end{align}
   By triangle inequality we have
   \begin{equation}\label{eq:noninc2}
     \sigma\|x_n-\widehat{x}_{n+1}\|-\sigma\gamma_{n-1} \|v_n\| \leq \sigma\| x_n - \gamma_{n-1} v_n - \widehat{x}_{n+1}\|
   \end{equation}
   and
      \begin{equation}\label{eq:noninc3}
     \|x_n - \gamma_{n-1} v_n - x_{n+1}\| \leq \|x_n - x_{n+1}\| + \gamma_{n-1}\| v_n \|.
   \end{equation}
   Then, \eqref{eq:noninc1}, \eqref{eq:noninc2},  and \eqref{eq:noninc3} imply that
   \begin{align*}
      \sigma \|x_n-\widehat{x}_{n+1}\| &\leq \|x_n-x_{n+1}\|+(1+\sigma)\gamma_{n-1}\|v_n\|,
   \end{align*}
   which yields
   \begin{equation}\label{eq:xwxto0}
       x_n-\widehat{x}_{n+1} \to 0.
   \end{equation}
   Now let $\overline{x}$ be a weak cluster point of $(x_n)_{n \in \N}$ and let $(x_{n_k})_{k \in \N}$ be a subsequence such that $x_{n_k} \weak \overline{x}$. It follows from \eqref{eq:xwxto0} that $\widehat{x}_{n_k}\weak \overline{x}$. Since $\{\overline{x}\} \bigcup_{k \in \N}[x_{n_k},\widehat{x}_{n_k+1}]$ is a weakly compact subset of $\overline{\conv} \dom A = \overline{\dom} A $ \cite[Lemma~3.2]{salzo2017}, it follows from the uniform continuity of $B$ and \eqref{eq:xwxto0} that 
   \begin{equation}\label{eq:BxBwxto0}
       B \widehat{x}_{n_k+1} - B x_{n_k} \to 0.
   \end{equation}
    Moreover, from \eqref{eq:gammarec}, we deduce that
       \begin{equation}\label{eq:xwxto02}
       \frac{\widehat{x}_{n_k+1} -  x_{n_k}}{\widehat{\gamma}_{n_k}}\ \to 0 \quad \textnormal{ and } \quad \frac{\gamma_{n_k-1}}{\widehat{\gamma}_{n_k}}v_{n_k} \to 0.
   \end{equation}
   It follows from \eqref{eq:defwx}  that
    \begin{equation}\label{eq:resolvent2}
       (\forall k \in \N) \quad \widehat{z}_{n_k+1} :=\frac{x_{n_k} - \widehat{x}_{n_k+1}}{\widehat\gamma_{n_k}}+ B\widehat x_{n_k+1} - Bx_{n_k} - \frac{\gamma_{n_k-1}}{\widehat\gamma_{n_k}}v_{n_k}  \in (A+B)\widehat x_{n_k+1}.
    \end{equation}
    Finally, in view of \eqref{eq:xwxto02} and \eqref{eq:BxBwxto0}, $\widehat{z}_{n_k+1} \to 0$ and the result follows similarly to case \ref{teo:convergenciai}. 
\end{enumerate}
\end{proof}
\begin{rem}
Algorithm~\ref{alg:algoMT} considers, at each iteration, $\alpha_0$ as the initial step-size in the linesearch subroutine. While this allows for the possibility of taking larger steps, in some instances it can slow down the algorithm because the number of inner linesearch subiterations can be significantly large. To avoid this issue, it is possible not to restart the step-size parameter in the linesearch and instead initialize it with $\gamma_{n-1}$. The convergence proof for this variation of Algorithm~\ref{alg:algoMT} is analogous. In Section~\ref{sec:num}, we present numerical experiments comparing the performance of the algorithm with and without restarting the step-size parameter.
\end{rem}
Next, we revisit Example~\ref{example1} to demonstrate that the linesearch procedure in Algorithm~\ref{alg:algoMT} is well defined for this scenario.
\begin{example} Consider the same setting as in Example~\ref{example1} we have $\gamma_0=1$, $v_1=(0,1)$, $\|x_{\gamma(t)} - x_1\| = t^2 \sqrt{1 + t^2}$, and $\|Bx_{\gamma(t)} - Bx_1\| = t^{2/3} \sqrt{1 + t^{14/3}}$ with $\gamma = \frac{3}{2}t^4 + t^2 + \frac{3}{2}t$. Then, the linesearch condition in \eqref{eq:linesearch} reduces to:
\begin{equation}
    \left( \frac{3}{2}t^4 + t^2 + \frac{3}{2}t \right) t^{2/3} \sqrt{1 + t^{14/3}} \leq \theta \left(t^2 \sqrt{1 + t^2}+ 1 \cdot \|(1,0)\|\right) = \theta t^2 \sqrt{1 + t^2}+ \theta,
\end{equation}
which clearly holds for sufficiently small $t \in \RPP$.
\end{example}
    \section{Four Operator Splitting  }\label{sec:cocoandlips}
In this section, we consider the following monotone inclusion 
problem.
\begin{pro}\label{prob:problem1}
Let $A: \H \to 2^\H$ be a  maximally monotone operator, let 
$C : \H \to \H$ be a $\beta$-cocoercive operator for some 
$\beta 
>0$, let $D :\H \to \H$ be a monotone and $L$-Lipschitz continuous 
operator for some $L>0$, and let $B : \H \to 2^\H$ be a 
maximally 
monotone operator such that $B$ is single valued and 
continuous 
in $\overline{\dom}A \subset \dom B$. Moreover assume that 
$A+B$ 
is 
maximally 
monotone. The 
problem is to 
\begin{equation}\label{eq:problem1E}
\text{find} \quad x \in \H \quad \text{such that} \quad 0 \in 
Ax+Bx+Cx+Dx,
\end{equation}
under the assumption that the set of solutions to 
\eqref{prob:problem1} is nonempty.
\end{pro}	
Note that $B+C+D$ is a continuous operator; thus, this problem can be viewed as a particular instance of Problem~\ref{prob:problemMAIN}. However, as shown by the authors in \cite{BricenoRoldan4op}, separating the operators to exploit their individual properties can significantly improve the numerical performance and convergence of the algorithm. The following algorithm is an extension of Algorithm~\ref{alg:algoMT} for solving Problem~\ref{prob:problem1}.
\begin{algo}\label{alg:algoMTE}
		In the context of Problem~\ref{prob:problem1}, let $\gamma_{-1} \in \RPP$, let $(\theta,\sigma) \in ~ ]0,1[^2$, let $\tau \in \RPP$, let $(\alpha_k)_{k \in \N}$ be defined by $\alpha_k =\tau \sigma^k$, and let $(x_0,x_{-1}) \in (\dom A)^2$. Define $v_0 = Bx_0-Bx_{-1}+ Dx_0-Dx_{-1}$ and consider the sequence defined recursively by
		\begin{equation}
			\label{eq:algoMTE}
			(\forall n \in \N)\quad \left\lfloor
			\begin{aligned}
                & v_{n}  = Bx_{n} - B x_{n-1} +Dx_{n} - Dx_{n-1}, \\
                &x_{n+1} = J_{\gamma_n A} \left( x_n -\gamma_n (B+C+D) x_n -\gamma_{n-1}v_n\right),
			\end{aligned}
			\right.
		\end{equation} 
        where, for each $n \in \N$, $\gamma_n = \alpha_k$ where $k$ is the smallest natural number such that 
        \begin{equation}\label{eq:linesearchE}
            \gamma_{n}\|v_{n+1}\| \leq (\theta + \gamma_n L) \|x_{n+1}-x_n\| + \theta \gamma_{n-1}\|v_n\|.
        \end{equation}
	\end{algo}	
The following proposition ensures that Algorithm~\ref{alg:algoMTE} is well defined and it establishes key estimates necessary to derive the convergence of the algorithm.
\begin{prop}\label{prop:MTFejer22}
    In the context of Problem~\ref{prob:problem1} let $(v_n,x_n)_{n \in \N}$ be generated by Algorithm~\ref{alg:algoMTE}. In addition, let $\widehat x \in \zer(A+B+C+D)$, and define, for each $n \in \N$,
    \begin{equation}\label{eq:defGamma2}
        \Gamma_n(\widehat{x}) = \|x_{n}-\widehat{x}\|^2 - 2\gamma_{n-1}\scal{x_n-\widehat{x}}{v_n}+\frac{\gamma_{n-1}^2}
        {\rho^2} \|v_n\|^2.
    \end{equation}
    Then, the following assertions hold:
    \begin{enumerate}
    \item\label{prop:MTFejer220}  For each $n \in \N$, there exists $k_n \in \N$ such that \eqref{eq:linesearchE} holds for $\gamma_n = \alpha_{k_n}$.
        \item\label{prop:MTFejer221} Let $\varepsilon \in \RPP$ and suppose that $2\beta\varepsilon \geq \tau$. Then, for every $n \in \N$,
            \begin{align}\label{eq:desGamma2}
             \Gamma_{n+1}(\widehat{x}) \leq \Gamma_{n}(\widehat{x}) &-\left(1-\kappa^2  -\frac{(\theta+L)}{\rho^2}\left(\theta(1+\rho^2)+L\gamma_n^2\right) - \frac{\gamma_n}{2\beta} \right)
            \|x_{n+1}-x_n\|^2 \nonumber\\
            &\qquad - \left( 1 -\frac{\rho^2}{\kappa^2}-\theta\left(\theta+L\right)\left(1+\frac{1}{\rho^2}\right)\right)\frac{\gamma_{n-1}^2}{\rho^2}\|v_{n}\|^2  
            \end{align}
        \item \label{prop:MTFejer222} For every $n \in \N$
        \begin{equation}\label{eq:desGammabel2} \Gamma_{n}(\widehat x) \geq \left(1-\rho^2\right) \|x_n-\widehat{x}\|^2.
        \end{equation}
    \end{enumerate}
\end{prop}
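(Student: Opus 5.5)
The plan is to follow the proof of Proposition~\ref{prop:Glinesearch} for \ref{prop:MTFejer220} and the proof of Proposition~\ref{prop:MTFejer} for \ref{prop:MTFejer221} and \ref{prop:MTFejer222}. The only new ingredients are the cocoercive term $C$ and the Lipschitz part $D$ of the reflection.

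\emph{Assertion \ref{prop:MTFejer220}.} Fix $n$ and set $w_n=x_n-\gamma_{n-1}v_n$. For each $k\in\N$ define
\[
z_k=J_{\alpha_k A}\big(w_n-\alpha_k(B+C+D)x_n\big), \qquad p_k=(B+D)z_k-(B+D)x_n .
\]
\emph{Case $v_n\neq 0$.} As $\alpha_k\to 0$ we have $z_k\to P_{\overline{\dom}A}w_n$. Hence $(Bz_k)_{k}$ is bounded by local boundedness of the continuous operator $B$ on $\overline{\dom}A$, and $(Dz_k)_k$ is bounded because $D$ is Lipschitz. Therefore $\alpha_k\|p_k\|\to 0$, while the right-hand side of \eqref{eq:linesearchE} is at least $\theta\gamma_{n-1}\|v_n\|>0$.

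\emph{Case $v_n=0$.} Since $\alpha_k\|Dz_k-Dx_n\|\le \alpha_k L\|z_k-x_n\|$, it suffices to have $\alpha_k\|Bz_k-Bx_n\|\le\theta\|z_k-x_n\|$. This holds for all small $\alpha_k$ by Lemma~\ref{lemma}\ref{lemmaii} applied with $z=x_n$ and $y=(B+C+D)x_n$.

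\emph{Assertion \ref{prop:MTFejer221}.} Start from the resolvent inclusion
\[
x_n-x_{n+1}-\gamma_n(B+C+D)x_n-\gamma_{n-1}v_n\in\gamma_nAx_{n+1}
\]
and from $-\gamma_n(B+C+D)\widehat x\in\gamma_nA\widehat x$. Monotonicity of $A$ and the polarization identity \eqref{eq:proofprop12} then give the analogue of \eqref{eq:proofprop13}. The three operator terms are handled as follows.

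For $B+D$, monotonicity gives, exactly as before, $2\gamma_n\scal{(B+D)x_n-(B+D)\widehat x}{x_{n+1}-\widehat x}\ge -2\gamma_n\scal{v_{n+1}}{x_{n+1}-\widehat x}$.

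For $C$, split $x_{n+1}-\widehat x=(x_n-\widehat x)+(x_{n+1}-x_n)$ and use cocoercivity \eqref{def:coco}. With $u=Cx_n-C\widehat x$, minimizing $2\gamma_n(\beta\|u\|^2-\|u\|\|x_{n+1}-x_n\|)$ over $\|u\|$ yields
\[
2\gamma_n\scal{Cx_n-C\widehat x}{x_{n+1}-\widehat x}\ge-\tfrac{\gamma_n}{2\beta}\|x_{n+1}-x_n\|^2 .
\]

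For the cross term, Young's inequality with parameter $\kappa^2$ gives
\[
-2\gamma_{n-1}\scal{v_n}{x_{n+1}-x_n}\le\kappa^2\|x_{n+1}-x_n\|^2+\tfrac{\gamma_{n-1}^2}{\kappa^2}\|v_n\|^2 .
\]
This produces the coefficients $1-\kappa^2$ and $\rho^2/\kappa^2$ appearing in \eqref{eq:desGamma2}.

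Next, add $\frac{\gamma_n^2}{\rho^2}\|v_{n+1}\|^2+\frac{\gamma_{n-1}^2}{\rho^2}\|v_n\|^2$ to both sides to recover $\Gamma_{n+1}(\widehat x)$ and $\Gamma_n(\widehat x)$. It remains to bound $\gamma_n^2\|v_{n+1}\|^2$ via \eqref{eq:linesearchE}, namely $\big((\theta+\gamma_nL)a+\theta b\big)^2$ with $a=\|x_{n+1}-x_n\|$ and $b=\gamma_{n-1}\|v_n\|$.

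\emph{Main obstacle.} The hard part is this last estimate, i.e. splitting the square so that it matches exactly $(\theta+L)\big(\theta(1+\rho^2)+L\gamma_n^2\big)a^2+\theta(\theta+L)(1+\rho^{-2})b^2$. I would first write $(\theta+\gamma_nL)a+\theta b=\theta(a+b)+\gamma_nLa$. Then I would apply Cauchy--Schwarz with weights $\theta$ and $L$, in the form $(\theta s+Lt)^2\le(\theta+L)(\theta s^2+Lt^2)$, with $s=a+b$ and $t=\gamma_na$. Finally, Young's inequality $(a+b)^2\le(1+\rho^2)a^2+(1+\rho^{-2})b^2$ as in \eqref{eq:proofprop18} should give exactly the stated constants. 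The hypothesis $2\beta\varepsilon\ge\tau$, together with $\gamma_n\le\tau$, is kept available in case the $C$-term has to be absorbed with a uniform constant instead of $\gamma_n/(2\beta)$.

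\emph{Assertion \ref{prop:MTFejer222}.} This is verbatim the computation in Proposition~\ref{prop:MTFejer}\ref{prop:MTFejer2}. One applies Cauchy--Schwarz and Young to $2\gamma_{n-1}\scal{x_n-\widehat x}{v_n}\le\rho^2\|x_n-\widehat x\|^2+\frac{\gamma_{n-1}^2}{\rho^2}\|v_n\|^2$, and the $\|v_n\|^2$ terms cancel.
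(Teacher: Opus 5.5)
Your proposal is correct and follows the paper's proof essentially step by step. It uses the linesearch argument of Proposition~\ref{prop:Glinesearch} with the $D$-part absorbed by its Lipschitz constant, and the same monotonicity and polarization chain with $B+D$ treated as one monotone operator. Your weighted Cauchy--Schwarz bound $(\theta s+Lt)^2\le(\theta+L)(\theta s^2+Lt^2)$ is exactly the paper's Young step with parameter $L/\theta$.

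The one small difference is the cocoercive term. The paper applies Young with a fixed $\varepsilon$ and needs $2\beta\varepsilon\ge\tau\ge\gamma_n$ to drop the leftover $\|Cx_n-C\widehat x\|^2$ term, so it ends with $\varepsilon$ in place of $\gamma_n/(2\beta)$, which is slightly weaker than what is stated. Your optimal choice of parameter gives exactly the stated coefficient $\gamma_n/(2\beta)$ and makes that hypothesis unnecessary.
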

\begin{proof} \begin{enumerate}
\item  Fix $n \in \N$. It follows from Proposition~\ref{prop:Glinesearch} that there exists $k_n\in \N$ such that, for $\gamma_n= \alpha_{k_n}$, we have
\begin{equation*}
    \gamma_n\|Bx_{n+1}-Bx_n\| \leq \theta(\|x_{n+1}-x_n\|+\gamma_{n-1}\|v_n\|).
\end{equation*}
Hence, by the Lipschitz continuous property of $D$ we deduce
\begin{equation*}
    \gamma_n \|v_{n+1}\| \leq \gamma_n \|Bx_{n+1}-Bx_n\|+\gamma_n\|Dx_{n+1}-Dx_n\|\leq \theta(\|x_{n+1}-x_n\|+\gamma_{n-1}\|v_n\|) +\gamma_n L \|x_{n+1}-x_n\|,
\end{equation*}
which yields the result.
\item     Fix $n \in \N$. It follows from \eqref{eq:algoMTE} that
    \begin{align*}
        x_{n} - x_{n+1} - \gamma_n (B+C+D)x_{n} - \gamma_{n-1}v_n \in \gamma_n A x_{n+1}
    \end{align*}
    Then, since $-\gamma_n (B+C+D) \hat{x} \in \gamma_n A \hat{x}$, the monotonicity of $A$ yields
    \begin{equation}\label{eq:prooffejer21}
        0 \leq \scal{x_n - x_{n+1} - \gamma_n (B+C+D)x_n - \gamma_{n-1} v_n + \gamma_n (B+C+D) \hat{x} }{x_{n+1} - \hat{x}}.
    \end{equation}
  Noting that $B+D$ is monotone and proceeding similarly to the proof of Proposition~\ref{prop:MTFejer}, we deduce that 
\begin{align}\label{eq:prooffejer22}
    \|x_{n+1}&-\widehat x\|^2 -2\gamma_{n}  \scal{v_{n+1}}{x_{n+1}-\widehat x}\nonumber\\
    &\leq \|x_n-\widehat x\|^2 - 2\gamma_{n-1} \scal{v_n}{x_n-\widehat x} - 2\gamma_{n-1} \scal{v_n}{x_{n+1}-x_n}\nonumber\\
    & \hspace{5cm}- \|x_n - x_{n+1}\|^2-2\gamma_n \scal{Cx_{n} - C \hat{x}}{x_{n+1} - \hat{x}}.
\end{align}
    By applying Cauchy-Schwarz and Young's inequalities, for every $\kappa \in \RPP$, we have
    \begin{align}\label{eq:prooffejer23}
        2 \gamma_{n-1}\scal{v_n}{x_{n+1} - x_{n}} &\leq 2 \gamma_{n-1}\|v_n\| \|x_{n+1} - x_{n} \| \nonumber\\
        &\leq \frac{\gamma_{n-1}^2}{\kappa^2} \|v_n\|^2 +\kappa^2\|x_{n+1} - x_{n}\|^2.
    \end{align}
    Moreover, by the $\beta$-cocoercivity of $C$,
    \begin{align}\label{eq:prooffejer24}
        - 2\gamma_n\scal{Cx_n - C\hat{x}}{x_{n+1} - \hat{x}} &=- 2\gamma_n\scal{Cx_n - C\hat{x}}{x_{n+1} - x_n}- 2\gamma_n\scal{Cx_n - C\hat{x}}{x_{n} - \hat{x}}\nonumber\\
        &\leq 2\gamma_{n}\|C x_n - C \hat{x}\| \|x_{n+1} - x_n\| - 2\gamma_n \beta\|Cx_n - C\hat{x}\|^2 \nonumber\\
        &\leq \frac{\gamma_n ^2}{\varepsilon} \|C x_n - C \hat{x}\|^2 + \varepsilon\|x_{n+1} - x_n\|^2 - 2\gamma_n \beta \|Cx_n - C\hat{x}\|^2 \nonumber\\
        &= \varepsilon \|x_{n+1} - x_n\|^2 - \frac{\gamma_n}{\varepsilon}\left(2\beta\varepsilon - \gamma_n \right)\|C x_n - C \hat{x}\|^2.
    \end{align}
   Combining \eqref{eq:prooffejer22}-\eqref{eq:prooffejer24} and adding $\dfrac{\gamma_{n}^2}{\rho^2}\|v_{n+1}\|^2 + \dfrac{\gamma_{n-1}^2}{\rho^2}\|v_{n}\|^2$ we obtain
        \begin{align}\label{eq:prooffejer25}
          \Gamma_          {n+1}(\hat{x})&\leq \Gamma_
          {n}(\hat{x})  -(1-\kappa^2-\varepsilon) \|x_{n+1}-x_n\|^2- \left( \frac{\gamma_{n-1}^2}{\rho^2}  - \frac{\gamma_{n-1}^2}{\kappa^2} \right) \|v_n\|^2\nonumber \\
          & \hspace{2cm}- \frac{\gamma_n}{\varepsilon}\left(2\beta\varepsilon - \gamma_n \right)\|C x_n - C \hat{x}\|^2+\dfrac{\gamma_{n}^2}{\rho^2}\|v_{n+1}\|^2 \nonumber \\
          &\leq \Gamma_
          {n}(\hat{x})  -(1-\kappa^2-\varepsilon) \|x_{n+1}-x_n\|^2- \left( \frac{\gamma_{n-1}^2}{\rho^2}  - \frac{\gamma_{n-1}^2}{\kappa^2} \right) \|v_n\|^2\nonumber +\dfrac{\gamma_{n}^2}{\rho^2}\|v_{n+1}\|^2 ,\nonumber
    \end{align}
    where the last inequality follows from $2\beta\varepsilon \geq \tau \geq \gamma_n$.  Finally, the result follows noticing that, in view of \eqref{eq:linesearchE}, we have
    \begin{align*}
        \frac{\gamma_n^2}{\rho^2}\|v_{n+1}\|^2 
        &\leq \frac{1}{\rho^2}((\theta+\gamma_nL)\|x_{n+1}-x_n\|+\theta\gamma_{n-1}\|v_n\|)^2\\
        & \leq  \frac{\theta^2}{\rho^2}\left(1+\frac{L}{\theta}\right)(\|x_{n+1}-x_n\|+\gamma_{n-1}\|v_n\|)^2+\frac{\gamma_n^2L^2}{\rho^2}\left(1+\frac{\theta}{L}\right)\|x_{n+1}-x_n\|^2 \\
        & \leq  \frac{\theta^2}{\rho^2}\left(1+\frac{L}{\theta}\right)\left((1+\rho^2)\|x_{n+1}-x_n\|^2+\left(1+\frac{1}{\rho^2}\right)\gamma_{n-1}^2\|v_n\|^2\right)\\
                &\hspace{3cm}+\frac{\gamma_n^2L^2}{\rho^2}\left(1+\frac{\theta}{L}\right)\|x_{n+1}-x_n\|^2 \\
        & =  \frac{(\theta+L)}{\rho^2}\left(\theta(1+\rho^2)+L\gamma_n^2\right)\|x_{n+1}-x_n\|^2+\frac{\theta}{\rho^2}\left(\theta+L\right)\left(1+\frac{1}{\rho^2}\right)\gamma_{n-1}^2\|v_n\|^2.
    \end{align*}
    \item This is directly by using Cauchy-Schwarz and Young's inequalities.
\end{enumerate}
\end{proof}

\begin{teo}\label{teo:convergencia2}
    In the context of Problem~\ref{prob:problem1} let $(v_n,x_n)_{n \in \N}$ be generated by Algorithm~\ref{alg:algoMTE}. Let $\varepsilon \in \RPP$ be such that $2\beta\varepsilon \geq \tau$ and suppose that
    \begin{equation} \label{eq:stepsize2}
       1-\kappa^2  -\frac{(\theta+L)}{\rho^2}\left(\theta(1+\rho^2)+L\tau^2\right) - \varepsilon > 0 \quad \textnormal{ and } \quad 1 -\frac{\rho^2}{\kappa^2}-\theta\left(\theta+L\right)\left(1+\frac{1}{\rho^2}\right)> 0.
    \end{equation}
     Moreover, assume that one of the following assertions holds
    \begin{enumerate}
        \item\label{teo:convergencia21} $\displaystyle \liminf_{n \to +\infty} \gamma_n = \gamma>0.$
        \item\label{teo:convergencia22} $B$ is uniformly continuous in any weakly compact subset of $\overline{\dom} A$.
    \end{enumerate}
    Then $(x_n)_{n \in \N}$ converges weakly to a point in $\zer (A+B+C+D)$.
\end{teo}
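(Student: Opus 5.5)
We will mirror the proof of Theorem~\ref{teo:convergencia}, using Proposition~\ref{prop:MTFejer22} in place of Proposition~\ref{prop:MTFejer}. Fix $\widehat{x}\in\zer(A+B+C+D)$ and, with $\rho\in\left]0,1\right[$ and $\kappa$ as in \eqref{eq:stepsize2}, let $\Gamma_n(\widehat{x})$ be as in \eqref{eq:defGamma2}. Since $\gamma_n\le\tau$ for every $n$, we have $L\gamma_n^2\le L\tau^2$. Because $2\beta\varepsilon\ge\tau\ge\gamma_n$, the cocoercive term in \eqref{eq:desGamma2} is bounded by $\varepsilon$; the Young estimate \eqref{eq:prooffejer24} in fact produces $\varepsilon$ directly. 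Hence both coefficients in \eqref{eq:desGamma2} are bounded below by the positive constants in \eqref{eq:stepsize2}. By \cite[Lemma~5.31]{bauschkebook2017}, $(\Gamma_n(\widehat{x}))_{n\in\N}$ converges, and $\sum\|x_{n+1}-x_n\|^2<+\infty$ and $\sum\gamma_{n-1}^2\|v_n\|^2<+\infty$. Proposition~\ref{prop:MTFejer22}\ref{prop:MTFejer222} then gives boundedness of $(x_n)_{n\in\N}$, and also $x_{n+1}-x_n\to0$ and $\gamma_{n-1}v_n\to0$. Since $\gamma_{n-1}\scal{x_n-\widehat x}{v_n}\to0$ and $\gamma_{n-1}^2\|v_n\|^2\to0$, the sequence $(\|x_n-\widehat x\|)_{n\in\N}$ converges. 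By Opial's lemma \cite[Lemma~2.47]{bauschkebook2017}, it then remains to show that every weak cluster point lies in $\zer(A+B+C+D)$.

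\emph{Case \ref{teo:convergencia21}.} Here $v_n\to0$. From \eqref{eq:algoMTE},
\begin{equation*}
z_n:=\frac{x_n-x_{n+1}}{\gamma_n}+v_{n+1}-\frac{\gamma_{n-1}}{\gamma_n}v_n+Cx_{n+1}-Cx_n\in(A+B+C+D)x_{n+1}.
\end{equation*}
We have $z_n\to0$, using the Lipschitz continuity of $C$ and the fact that $\gamma_{n-1}/\gamma_n\le \tau/\gamma_n$ is bounded for large $n$. The operator $A+B+C+D$ is maximally monotone, since $A+B$ is maximally monotone and $C+D$ is monotone, continuous, and has full domain \cite[Corollary~25.5]{bauschkebook2017}. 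Its graph is therefore weak–strong closed, and we conclude as in Theorem~\ref{teo:convergencia}.

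\emph{Case \ref{teo:convergencia22}.} We may assume $\liminf\gamma_n=0$ and, along the relevant subsequence, that $\gamma_n<\tau$, so the trial step $\widehat\gamma_n=\gamma_n/\sigma$ was rejected. Set $\widehat x_{n+1}=J_{\widehat\gamma_nA}(x_n-\gamma_{n-1}v_n-\widehat\gamma_n(B+C+D)x_n)$. The main obstacle is that rejection in \eqref{eq:linesearchE} concerns $\widehat v=B\widehat x_{n+1}-Bx_n+D\widehat x_{n+1}-Dx_n$ and the threshold $(\theta+\widehat\gamma_nL)\|\widehat x_{n+1}-x_n\|$. We will handle it by using the triangle inequality and $\|D\widehat x_{n+1}-Dx_n\|\le L\|\widehat x_{n+1}-x_n\|$: the $L$ terms cancel, so rejection implies
\begin{equation*}
\widehat\gamma_n\|B\widehat x_{n+1}-Bx_n\|>\theta(\|\widehat x_{n+1}-x_n\|+\gamma_{n-1}\|v_n\|),
\end{equation*}
exactly \eqref{eq:gammarec}. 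Lemma~\ref{lemma}\ref{lemmai}, applied with $z=x_n-\gamma_{n-1}v_n$ and $y=(B+C+D)x_n$, yields as before $\sigma\|x_n-\widehat x_{n+1}\|\le\|x_n-x_{n+1}\|+(1+\sigma)\gamma_{n-1}\|v_n\|\to0$. Uniform continuity of $B$ on the weakly compact set $\{\overline x\}\cup\bigcup_k[x_{n_k},\widehat x_{n_k+1}]$ then gives $B\widehat x_{n_k+1}-Bx_{n_k}\to0$, and the rejection inequality forces $(\widehat x_{n_k+1}-x_{n_k})/\widehat\gamma_{n_k}\to0$ and $(\gamma_{n_k-1}/\widehat\gamma_{n_k})v_{n_k}\to0$. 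Finally, the element
\begin{equation*}
\frac{x_{n_k}-\widehat x_{n_k+1}}{\widehat\gamma_{n_k}}-\frac{\gamma_{n_k-1}}{\widehat\gamma_{n_k}}v_{n_k}+(B+C+D)\widehat x_{n_k+1}-(B+C+D)x_{n_k}
\end{equation*}
lies in $(A+B+C+D)\widehat x_{n_k+1}$. It tends to $0$, because the $C$ and $D$ differences are controlled by their Lipschitz constants times $\|\widehat x_{n_k+1}-x_{n_k}\|\to0$. Weak–strong closedness of the graph again gives $\overline x\in\zer(A+B+C+D)$.
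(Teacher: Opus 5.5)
Your proof is correct and follows the paper's route. The paper's proof of this theorem only says it is analogous to Theorem~\ref{teo:convergencia}, and you carry out that adaptation: Proposition~\ref{prop:MTFejer22} with $\gamma_n\le\tau$ and $\gamma_n/(2\beta)\le\varepsilon$, then Opial's lemma and the same two cases. You also supply the details the paper leaves implicit, and both are right: the cancellation of the $\widehat\gamma_n L\|\widehat x_{n+1}-x_n\|$ terms that reduces rejection in \eqref{eq:linesearchE} to \eqref{eq:gammarec}, and the maximal monotonicity of $A+B+C+D$, since $C+D$ is monotone, continuous and everywhere defined.
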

\begin{proof} The condition \eqref{eq:stepsize2} guarantees that $(\Gamma_n(\widehat{x}))_{n \in \N}$, defined in \eqref{eq:defGamma2}, is nonnegative for every $\widehat{x} \in \zer (A+B+C+D)$. The proof is analogous to the proof of Theorem~\ref{teo:convergencia}.
\end{proof}
\begin{rem} \label{remthetarho}
\begin{enumerate}
    \item Note that if $\kappa > \rho$ \eqref{eq:stepsize2} holds for $\theta$ and $\tau$ sufficiently small. 
    \item Suppose that $\kappa^2 \geq \rho$. Then,
    \begin{align*}
         1-\kappa^2  -\frac{(\theta+L)}{\rho^2}\left(\theta(1+\rho^2)+L\tau^2\right) - \varepsilon &  <  1-\kappa^2  -\frac{(\theta+L)}{\rho^2}\left(\theta(1+\rho^2)\right) \\
         &\leq  1-\frac{\rho^2}{\kappa^2}  -\frac{(\theta+L)}{\rho^2}\left(\theta(1+\rho^2)\right).
    \end{align*}
    Hence, the second inequality in \eqref{eq:stepsize2} follows directly from the first one.
    \item Let $\alpha = \frac{(\theta+L)}{\rho^2}\theta(1+\rho^2)$. The condition \eqref{eq:stepsize2} implies
    \begin{equation*}
        1-\alpha > \kappa^2 > \frac{\rho^2}{1-\alpha}.
    \end{equation*}
    Therefore, a necessary condition for \eqref{eq:stepsize2} to hold is that $1-\alpha >\frac{\rho^2}{1-\alpha}$ which is equivalent to
    \begin{equation}\label{eq:rhotheta}
        \psi(\rho):=1-\rho-\frac{(\theta+L)}{\rho^2}\theta(1+\rho^2) >0.
    \end{equation}
    The function $\psi(\rho)$ is attains its maximum at $\rho = (2(\theta+L)\theta)^{1/3}$. This value of $\rho$ can be used to simplify the parameters selection.
    \item When $B=0$, by setting $\theta = 0$ and $\rho = \kappa = \sqrt{\tau L}$, \eqref{eq:stepsize2} reduces to the condition guaranteeing the convergence of FHRB, as presented in \cite[Theorem~5.2]{Malitsky2020SIAMJO}.
\end{enumerate}
\end{rem}
In the following example we apply Algorithm~\ref{alg:algoMTE} for solving convex optimization problems with nonlinear constraints. This problem was studied in \cite[Section~4]{BricenoRoldan4op}.
\begin{example}\label{ex:nonlinearconstraints}
 		Let $f\in\Gamma_0 (\H)$, let $g\in \Gamma_0(\G)$, let $h:\H \to 
	\R$ 
	be a convex G\^ateaux differentiable
	function such that $\nabla h$ is $\beta^{-1}$-Lipschitz continuous for 
	some $\beta \in \RPP$, let $M \colon \H \to \G$ be a bounded 
	linear 
	operator, and let $e \colon \H \to \RX^p \colon x 
	\mapsto 
	(e_i(x))_{1\le i 
		\le p}$ be such that, for every $i\in \{1,\ldots,p\}$, $e_i$ 
	is 
	convex and 
	G\^ateaux differentiable in $\inte \dom e_i$, $\dom e_i$ is 
	closed, 
	$\cap_{i=1}^p 
	\interior \dom 
	e_{i}\neq \varnothing$, and $\dom \partial f \subset  
	\cap_{i=1}^n \interior \dom  e_i $. Assume that $0 \in 
	\sri(\dom g - M (\dom f))$ and that 
	\begin{equation}\label{e:slater}
		\begin{cases}
			(\forall i \in \{1,\ldots,p\}) \quad \quad \lev_{\leq 0} 
			e_i \subset 
			\inte \dom e_i;\\
			\dom (f + g \circ M) \cap \bigcap_{i=1}^n \lev_{<0} e_i 
			\neq \varnothing.
		\end{cases}
	\end{equation} The problem is to 
	\begin{equation}\label{eq:probopti}
		\min_{e(x)\in \RM^p} f(x) + g (Mx) + h(x),
	\end{equation}
	and we assume that solutions exist.

    Let $\HH = \H \times \G \times \R^p$ and define the operators
		\begin{equation}\label{eq:defoperators}
			\begin{cases}
				A \colon \HH\to 2^{\HH} \colon (x,u,v) \mapsto 
				\partial f (x) \times 
				\partial g^*(u) \times N_{\RP^p}(v),\\
				C \colon \HH\to \HH \colon (x,u,v) \mapsto (\nabla 
				h (x) , 0 , 0),\\
				D \colon \HH\to \HH \colon (x,u,v) \mapsto 
				(M^*u,-Mx,0),\\
                	B \colon \HH\to 2^{\HH} 
			\nonumber\\
			(x,u,v) \mapsto \begin{cases}
				\!\!\left\{\!\left(\displaystyle{\sum_{i=1}^p} v_i 
				\nabla  e_i (x) , 
				0, 
				-e(x)\right)\!\!\right\},\hspace{-.3cm}& \text{ if } 
				v \in 
				\RP^p \text{ and } x \in \bigcap_{i=1}^p \interior 
				\dom 
				e_{i};\\
				\varnothing,& \text{ otherwise.} 		
			\end{cases}
			\end{cases}
		\end{equation}
We have that $A$ is maximally monotone, $C$ is $\beta$-cocoercive, and $D$ is $\|M\|$-Lipschitz continuous. Moreover, in view of \cite[Proposition~4.2]{BricenoRoldan4op}, $B$ is maximally monotone and it is uniformly continuous in every compact subset of $\overline{\dom} \partial f \times \overline{\dom} \partial 
			g^* \times 
			\RP^p$ if one of the following assertions holds:
			\begin{enumerate}
				\item\label{prop:hip1} $(\nabla e_i)_{1\leq i \leq 
					p}$ are 
				bounded and uniformly 
				continuous in every weakly compact subset of 
				$\overline{\dom} \partial f$. 
				\item\label{prop:hip2} $\H$ is finite dimensional 
				and  
				$(\nabla e_i)_{1\leq i \leq p}$ are 
				continuous in every compact subset of 
				$\overline{\dom} \partial 
				f$.
			\end{enumerate}
			Therefore, the optimization problem in \eqref{eq:probopti} is a particular instance of Problem~\ref{prob:problem1} for the operators defined in \eqref{eq:defoperators}. Note that, since $\dom (\partial f) 
		\subset  
		\cap_{i=1}^n \interior \dom  e_i $ we have $\dom A \subset \dom B$. Hence, the optimization problem can be solved by the Algorithm~\ref{alg:algoMTE}. In particular, let $(x_0^1,x_0^2,x_0^3)\in \H\times \G 
		\times 
		\R^p$,  let $(x_{-1}^1,x_{-1}^2,x_{-1}^3)\in \H\times \G 
		\times 
		\R^p$, let  $(\theta,\sigma,\kappa,\rho) \in ~ ]0,1[^4$ be such that
        \begin{equation}
       1-\kappa^2  -\frac{(\theta+\|M\|)}{\rho^2}\left(\theta(1+\rho^2)+\|M\|\tau^2\right) - \varepsilon > 0 \quad \textnormal{ and } \quad 1 -\frac{\rho^2}{\kappa^2}-\theta\left(\theta+\|M\|\right)\left(1+\frac{1}{\rho^2}\right)> 0,
    \end{equation} 
   where $\varepsilon \in \RPP$ satisfies $2\beta \varepsilon \geq \tau$. In this context, Algorithm~\ref{alg:algoMTE} iterates 
		\begin{equation}\label{eq:algorithmop}
			(n\in\N)\quad\begin{array}{l}
				\left\lfloor
				\begin{array}{l}
					v_n^1 = 
					\sum_{i=1}^p x_{n,i}^3 \nabla e_i(x_n^1) 
					 - \sum_{i=1}^p x_{n-1,i}^3 \nabla e_i(x_{n-1}^1)
					 + M^*x_n^2 - M^*x_{n-1}^2  \\
                    v_n^2 =  Mx_{n-1}^1-Mx_n^1\\
                    v_n^3 = e(x_{n-1}^1) -e(x_{n}^1)\\
                    x_{n+1}^1 =  \prox_{\gamma_n 
				f}\left(x_n^1-\gamma_n \left( \nabla h (x_n^1) + M^*x_n^2 + 
			\sum_{i=1}^p 
			x_{n,i}^3 \nabla e_i (x_n^1)\right)- \gamma_{n-1} v_n^1\right) \\
                    x_{n+1}^2 = \prox_{\gamma_n
				g^*} 
			(x_n^2+\gamma_n 
			M x_n^1 - \gamma_{n-1}v_n^2)  \\
                    x_{n+1}^3 = P_{\RP^p}\big(x_n^3+\gamma_n e 
			(x_n^1)-\gamma_{n-1} v_n^3\big) ,
				\end{array}
				\right.
			\end{array}
		\end{equation}        
        where, for each $n \in \N$, $\gamma_n$ is the largest value in $\{\tau, \tau \sigma, \tau \sigma^2, \ldots\}$, such that
        \begin{equation*}
            \gamma_n\|(v_{n+1}^1,v_{n+1}^2,v_{n+1}^3)\| \leq (\theta+\gamma_n\|M \|) \|(x_{n+1}^1, x_{n+1}^2, x_{n+1}^3)-(x_n^1, x_n^2, x_n^3)\|+\theta \gamma_{n-1}\|(v_{n}^1,v_{n}^2,v_{n}^3)\|.
        \end{equation*}
        For additional technical details regarding this example, see \cite[Section~4]{BricenoRoldan4op}.
\end{example}

\section{Numerical Experiments}\label{sec:num}
In this section we present two numerical examples to test the numerical performance of the proposed methods. In particular, we compare FRB with and without linesearch in saddle point problems. Next, we compare FHRB and FBHF with linesearch in the context of image deblurring problem with $\ell^p$ regularization terms for $p \in~]1,2[$. All numerical experiments were implemented in MATLAB on a desktop computer equipped with an Intel Core i7-14700K processor (3.4/5.6~GHz), 64~GB of RAM, and running Windows~11 Pro 64-bit. The code is available in this  \href{https://drive.google.com/file/d/1oANDJOOHLyJTHgXGNHk3eIyoDUbd2RHf/view?usp=drive_link}{repository}.

\subsection{Saddle Point Problems} 
Consider the following minimax problem:

\begin{pro}\label{prob:saddle}
	Let $f \in \Gamma_0(\H)$, $g \in \Gamma_0(\G)$, and let $L \colon \H\to \G$ be a bounded linear operator. The problem consists of solving
	\begin{equation}\label{eq:problemsaddle}
		\min_{x\in \H}\max_{y \in \G} f(x)+ \scal{Lx}{y}-g(y),
	\end{equation}
	assuming its solution set is nonempty.
\end{pro}

This saddle point problems arise in several applications including zero-sum games \cite{vonNeumannMorgenstern1944}, robust optimization \cite{RobustOpti2009}, generalized lasso problems \cite{Tibshirani1996,Tibshirani2011}, and generative adversarial networks \cite{gidel2019variational,Mescheder2017}, among others. By defining $A \colon \H \times \G \to 2^{\H \times \G} \colon (x,y)\mapsto \partial f(x) \times \partial g(y)$ and $B \colon \H \times \G \to {\H \times \G} \colon (x,y)\mapsto (L^*y,-Lx)$, this problem can be equivalently formulated as finding $(x,y) \in \H\times \G$ such that 
\begin{equation}
	(0,0) \in A(x,y) + B(x,y).
\end{equation}

Furthermore, since $A$ is a maximally monotone operator and $B$ is continuous with full domain, Problem~\ref{prob:saddle} is a particular instance of Problem~\ref{prob:problemMAIN} and can therefore be solved using Algorithm~\ref{alg:algoMT}. Additionally, noting that $B$ is $\|L\|$-Lipschitz continuous, the problem can also be solved by the standard FRB method without a linesearch. 

We compare the numerical performance of FRB without a linesearch against three variants: FRB with the linesearch proposed in \cite{Malitsky2020SIAMJO} for locally Lipschitz operators (FRBLSL), Algorithm~\ref{alg:algoMT} without restarting the step-size parameter at each iteration (FRBLS), and Algorithm~\ref{alg:algoMT} with step-size restarting (FRBLSR). For this comparison, we consider the following setting: $\H =\R^N$, $\G =\R^M$, $f (x) = \frac{1}{2}x^\top Q x + q^\top x$, and $g(y) = \iota_{[-1,1]^M}(y)$, where $N, M \in \N$, $L \in \R^{M\times N}$, $Q \in \mathbb{R}^{N \times N}$ is a symmetric positive definite matrix, and $q \in \R^N$. 

We evaluate the algorithms across 16 pairs of dimensions $(N,M)$, which are detailed in Table~\ref{T:results}. For each pair $(N,M)$, we randomly generate 20 instances of the matrices $Q$, $q$, and $L$ using MATLAB's \texttt{randn} function. As a stopping criterion, we use a relative error tolerance of $10^{-6}$ alongside a maximum limit of $10^6$ iterations. FRB was implemented with $\gamma = 0.99/(2\|L\|)$; FRBLSL with $\theta = 0.99$, $\sigma = 0.8$, and $\gamma_0=\gamma/\sigma^3$; and both FRBLS and FRBLSR with $\theta = 0.3525$, $\tau = 0.8$, and $\gamma_0 = \gamma/\tau^3$. 

The numerical results, expressed in terms of the average number of iterations and average CPU time for each dimension, are presented in Table~\ref{T:results}. From this table, we observe that in all cases, the algorithms utilizing the proposed linesearch outperform the standard FRB. In particular, for a fixed $N$, FRBLS achieves the lowest CPU time when $M$ is relatively small. Note that in these specific cases, although FRBLSR requires fewer iterations to converge, it consumes more CPU time. This behavior is explained by the fact that, at each iteration, the number of inner linesearch evaluations FRBLSR needs to find an admissible step-size is higher compared to FRBLSL and FRBLS, which rely on the step-size from the previous step. This is attributable to the fact that when $M$ is small, the dimension of the null space of $L$ is larger, making the linesearch condition more restrictive. Conversely, in the remaining scenarios (larger $M$), FRBLSR yields the best performance. This demonstrates that, even when dealing with Lipschitz continuous operators, incorporating an adaptive linesearch can significantly accelerate the FRB algorithm.

\begin{table}[h!]
	\centering
	\begin{tabular}{l cc cc cc cc}
		\toprule
		$\bm{N = 500}$ & \multicolumn{2}{c}{$\bm{M = 150}$} 
		& \multicolumn{2}{c}{$\bm{M = 250}$} 
		& \multicolumn{2}{c}{$\bm{M = 400}$} 
		& \multicolumn{2}{c}{$\bm{M = 450}$} \\
		\cmidrule(lr){2-3} \cmidrule(lr){4-5} \cmidrule(lr){6-7} \cmidrule(lr){8-9}
		\textbf{Algorithm} & \textbf{NI} & \textbf{T} 
		& \textbf{NI} & \textbf{T} 
		& \textbf{NI} & \textbf{T} 
		& \textbf{NI} & \textbf{T} \\
		\midrule
		FRB    & 492 & 0.83 & 1063 & 1.81 & 2706 & 5.29 & 3432 & 6.76 \\
		FRBLSL & 401 & 0.68 & 871  & 1.49 & 2226 & 4.37 & 2823 & 5.58 \\
		FRBLS & 326 & \textbf{0.55} & 714  & 1.22 & 1829 & 3.59 & 2325 & 4.59 \\
		FRBLSR & 305 & 0.84 & 592  & \textbf{1.06} & 1511 & \textbf{3.03} & 1920 & \textbf{3.87} \\
		
		\midrule
		$\bm{N = 1000}$ & \multicolumn{2}{c}{$\bm{M = 300}$} 
		& \multicolumn{2}{c}{$\bm{M = 500}$} 
		& \multicolumn{2}{c}{$\bm{M = 750}$} 
		& \multicolumn{2}{c}{$\bm{M = 900}$} \\
		\cmidrule(lr){2-3} \cmidrule(lr){4-5} \cmidrule(lr){6-7} \cmidrule(lr){8-9}
		\textbf{Algorithm} & \textbf{NI} & \textbf{T} 
		& \textbf{NI} & \textbf{T} 
		& \textbf{NI} & \textbf{T} 
		& \textbf{NI} & \textbf{T} \\
		\midrule
		FRB    & 699 & 4.26 & 1505 & 9.48 & 3231 & 21.00 & 4828 & 31.95 \\
		FRBLSL & 572 & 3.50 & 1231 & 7.76 & 2652 & 17.27 & 3973 & 26.31 \\
		FRBLS & 448 & \textbf{2.73} & 1009 & 6.36 & 2179 & 14.18 & 3267 & 21.65 \\
		FRBLSR & 419 & 3.73 & 840  & \textbf{5.67} & 1800 & \textbf{12.05} & 2695 & \textbf{18.19} \\
		
		\midrule
		$\bm{N = 2000}$ & \multicolumn{2}{c}{$\bm{M = 600}$} 
		& \multicolumn{2}{c}{$\bm{M = 1000}$} 
		& \multicolumn{2}{c}{$\bm{M = 1600}$} 
		& \multicolumn{2}{c}{$\bm{M = 1800}$} \\
		\cmidrule(lr){2-3} \cmidrule(lr){4-5} \cmidrule(lr){6-7} \cmidrule(lr){8-9}
		\textbf{Algorithm} & \textbf{NI} & \textbf{T} 
		& \textbf{NI} & \textbf{T} 
		& \textbf{NI} & \textbf{T} 
		& \textbf{NI} & \textbf{T} \\
		\midrule
		FRB    & 1001 & 24.24 & 2176 & 54.21 & 5391 & 139.31 & 7130 & 186.30 \\
		FRBLSL & 817  & 19.86 & 1782 & 44.48 & 4431 & 114.52 & 5869 & 153.51 \\
		FRBLS & 602  & \textbf{14.59} & 1460 & 36.35 & 3640 & 94.02  & 4827 & 126.19 \\
		FRBLSR & 566  & 16.06 & 1209 & \textbf{31.59} & 3003 & \textbf{79.44}  & 3983 & \textbf{106.01} \\
		
		\midrule
		$\bm{N = 3000}$ & \multicolumn{2}{c}{$\bm{M = 900}$} 
		& \multicolumn{2}{c}{$\bm{M = 1500}$} 
		& \multicolumn{2}{c}{$\bm{M = 2400}$} 
		& \multicolumn{2}{c}{$\bm{M = 2700}$} \\
		\cmidrule(lr){2-3} \cmidrule(lr){4-5} \cmidrule(lr){6-7} \cmidrule(lr){8-9}
		\textbf{Algorithm} & \textbf{NI} & \textbf{T} 
		& \textbf{NI} & \textbf{T} 
		& \textbf{NI} & \textbf{T} 
		& \textbf{NI} & \textbf{T} \\
		\midrule
		FRB    & 1211 & 78.85 & 2715 & 182.38 & 6475 & 450.53 & 8682 & 612.17 \\
		FRBLSL & 992  & 64.65 & 2228 & 149.15 & 5320 & 370.22 & 7140 & 503.91 \\
		FRBLS & 751  & \textbf{48.89} & 1825 & 122.19 & 4370 & 303.76 & 5871 & 414.67 \\
		FRBLSR & 743  & 76.22 & 1506 & \textbf{103.64} & 3606 & \textbf{256.65} & 4842 & \textbf{348.11} \\
		\bottomrule
	\end{tabular}
	\caption{Results in terms of average number of iterations (NI) and average CPU time in seconds (T). For each dimension, the best CPU time is highlighted in \textbf{black}.}
	\label{T:results}
\end{table}
 
\subsection{Image Deblurring via $\ell^p$ Total Variation} Let $N \in \N$, $M \in \N$, $\H = \R^N$, $\G= \R^M$, and consider $b \in \R^M$ a blurred and noisy observation of an image $\overline{x} \in \R^N$. In particular, we assume that
\begin{equation*}
	b = T\overline{x}+ \epsilon,
\end{equation*}
where $T \in \R^{M\times N}$ is a linear operator modeling the blur process and $\epsilon$ represents a Gaussian noise. A popular model for recovering $\overline{x}$ is to solve the following optimization problem: 
\begin{equation}\label{eq:problemoCT}
	\min_{x\in [0,255]^N} F(x):=\frac{1}{2}\|Tx-b\|_2^2 +\frac{\lambda}{p} \|\nabla x\|_p^p,
\end{equation}
where $\lambda \in \RPP$ is a regularization parameter, $p \in \RPP$, $\|\cdot\|_p$ denotes the $\ell^p$ norm, and $\nabla$ is the discrete gradient.
Note that, when $p= 2$,  $\|\cdot\|_p^p$ is a convex function with Lipschitz continuous gradient, but when $p \in ]0,2[$ the gradient is no longer Lipschitz continuous, when $p \in ]0,1]$ the function is no longer differentiable, and when $p \in ]0,1[$ it is no longer convex. Some of these particular cases have been studied, for example, in \cite{Chambolle2004l1unified,BucciniLothar2020,Chambolle2004,Lanza2017,Lanza2015,ROF1992}. To test the numerical performance of FHRB with linesearch, we will focus on the case where $p \in ]1,2[$. We therefore solve the optimization problem in  \eqref{eq:problemoCT} by using Algorithm~\ref{alg:algoMTE} with $A=N_{[0,255]^N}$, $B= -\lambda \nabla^{\top} \left( |\nabla x|^{p-2} \nabla x \right)$, $C = T^\top(T(\cdot) -b)$, and $D=0$. Note that, in this same setting, the problem can be also solved by FBHF \cite{BricenoDavis2018}, we hence compare both algorithms. We test the performance of both algorithms with and without restarting the step-size at each linesearch. We refer to FHRB with restart as FHRBLSR and without restart as FHRBLS. Similarly, we use FBHFLSR and FBHFLS to refer to FHBH with and without restart, respectively.

In our experiments, the linear operator $T$ represents an average blur kernel of size $K \times K$ with symmetric boundary conditions for $K \in \{3,6,9\}$, implemented in MATLAB using the \texttt{imfilter} function. The observations were corrupted by additive zero-mean white Gaussian noise $\epsilon$ with a standard deviation of $20$. As test images, we consider the ones shown in Figure~\ref{fig:x256} and in Figure~\ref{fig:x512}, which have resolutions of $256\times 256$ and $512\times 512$ pixels, respectively. In all instances, the regularization parameter was set to $\lambda = 5$. To initialize the algorithms, we performed a grid search over the parameter space $(\varepsilon, \tau) \in \{0.1, 0.2, \dots, 0.9\}^2$, alongside the parameters $\theta$ and $\gamma_0$ defined in Table~\ref{Tab:IP} (noting that $C$ is $\beta$-cocoercive with $\beta = 1/\|T\|^2$, $L=0$, and we choose $\kappa^2=\rho =  (2\theta^2)^{1/3}$, see \eqref{eq:stepsize2} and Remark~\ref{remthetarho}). For these tests, we set $p = 1/2$. Table~\ref{T:resimages1} reports the best performance achieved for each algorithm over the grid of $(\varepsilon, \tau)$. From this table, we observe that the algorithms incorporating step-size restarting terminate in fewer iterations; however, this does not necessarily translate to a lower CPU time. This behavior is attributable to the fact that algorithms without restarting allow for larger step-sizes, which, in some instances, require additional subiterations to satisfy the linesearch condition. Furthermore, the algorithms without restarting appear to be more robust with respect to the algorithmic parameters $\varepsilon$ and $\tau$, as the optimal values yielding the best results remain constant even when the dimension of the problem or the blur operator changes. From Table~\ref{T:resimages1} we also observed that for the $3\times 3$ kernels, FBHFLS performs best in terms of CPU time, although FHRBLS remains highly competitive. For the $6\times 6$ kernels, the fastest CPU times are achieved by FHRBLSR and FBHFLS for the $256\times 256$ and $512\times 512$ images, respectively. For the $9\times 9$ kernel, the FHRB type methods dominate and attain the lowest CPU times for the $256\times 256$ and $512\times 512$ images, respectively. With respect to the final objective function values, FHRBLSR attains the lowest minimum across all tested scenarios. Table~\ref{T:res_p_variation} presents the results for $p=1.2$ and $p=1.8$ using the $256\times 256$ pixel image. From this table, we observe that for $p=1.2$, the algorithms utilizing step-size restarting exhibit worse performance in terms of CPU time. This behavior can be attributed to the fact that for $1 < p < 2$, the local Lipschitz constant of the gradient of $\|\cdot\|_p^p$ grows unbounded near the origin, with this growth becoming even more pronounced as $p$ approaches $1$. Consequently, when the step-size is reinitialized to a large value, the linesearch procedure requires a significantly higher number of subiterations to satisfy the linesearch condition. Notice that for $p=1.2$ and across every kernel size, the FHRBLS methods outperform the other algorithms. On the other hand, for $p=1.8$, the methods with and without restarting demonstrate comparable performance, yielding results similar to the $p=3/2$ case. Some of the reconstructed images are shown in Figure~\ref{fig:256k3}, Figure~\ref{fig:512k6}, and Figure~\ref{fig:512k9}.

 Finally, we conclude that the algorithms without restart are more robust regarding changes in dimensions, kernel size, and the value of $p$. However, a further analysis of the convergence rates or the linesearch procedure for H\"older continuous operators should be considered in future research.
\begin{table}[h!]
    \centering  
    \begin{tabular}{lcc}
        \toprule
        \textbf{Algorithm} & $\gamma_0$ & $\theta$ \\
        \midrule
        FHRBLSR/FHRBLS & $2\varepsilon \tau/\|T\|^2$ & $\theta^2+3{\left(\frac{\theta}{2}\right)}^{\frac{2}{3}}=0.99(1-\varepsilon)$\\
        FBHFLSR/FBHFLS & $2\varepsilon \tau/\|T\|^2$ & $0.99\sqrt{1-\varepsilon}$ \\
        \bottomrule
    \end{tabular}    \caption{Initialization parameters for FHRBLSR/FHRBLS and FBHFLSR/FBHFLS.}  \label{Tab:IP}
\end{table}

\begin{table}[h!]
	\centering
    \vspace{0.15cm} 
    \resizebox{\textwidth}{!}{
	\begin{tabular}{l ccccc ccccc ccccc }
		\toprule
		$256 \times 256$ & \multicolumn{5}{c}{Kernel $3\times 3$} & 
		\multicolumn{5}{c}{Kernel $6\times 6$} & \multicolumn{5}{c}{Kernel $9\times 9$}  \\
		\cmidrule(lr){2-6} \cmidrule(lr){7-11} \cmidrule(lr){12-16}
		\textbf{Algorithm}  & $\varepsilon$ & $\tau$ & \textbf{NI} & \textbf{T} & $F(x_n)$ 
	& $\varepsilon$ & $\tau$ & \textbf{NI} & \textbf{T} & $F(x_n)$ 
	& $\varepsilon$ & $\tau$ & \textbf{NI} & \textbf{T} & $F(x_n)$ \\
		\midrule
		FHRBLSR    & 0.2 & 0.1 & 554 & 0.86 & 12.74 & 0.2 & 0.1 & 615  & \textbf{1.05} & 13.15 & 0.2 & 0.1 & 760  & 1.29 & 13.20 \\
		FHRBLS    & 0.1 & 0.6 & 860 & 0.69 & 12.76 & 0.1 & 0.4 & 1258 & 1.23 & 13.17 & 0.5 & 0.2 & 1350 & \textbf{1.21} & 13.20 \\
		FBHFLSR    & 0.3 & 0.1 & 367 & 0.80 & 12.75 & 0.3 & 0.1 & 597  & 1.43 & 13.17 & 0.2 & 0.1 & 858  & 1.82 & 13.21 \\
		FBHFLS    & 0.3 & 0.1 & 427 & \textbf{0.63} & 12.75 & 0.2 & 0.4 & 779  & 1.21 & 13.18 & 0.2 & 0.1 & 1000 & 1.47 & 13.21 \\		
		\midrule
        $512 \times 512$ & \multicolumn{5}{c}{Kernel $3\times 3$} & 
		\multicolumn{5}{c}{Kernel $6\times 6$} & \multicolumn{5}{c}{Kernel $9\times 9$}  \\
		\cmidrule(lr){2-6} \cmidrule(lr){7-11} \cmidrule(lr){12-16}
		\textbf{Algorithm}  & $\varepsilon$ & $\tau$ & \textbf{NI} & \textbf{T} & $F(x_n)$ 
	& $\varepsilon$ & $\tau$ & \textbf{NI} & \textbf{T} & $F(x_n)$ 
	& $\varepsilon$ & $\tau$ & \textbf{NI} & \textbf{T} & $F(x_n)$ \\
		\midrule
		FHRBLSR    & 0.2 & 0.1 & 609 & 10.27 & 51.88 & 0.2 & 0.1 & 735  & 13.58 & 53.36 & 0.2 & 0.1 & 794  & \textbf{14.44} & 53.35 \\
		FHRBLS    & 0.1 & 0.7 & 911 & 8.05  & 51.96 & 0.1 & 0.4 & 1411 & 12.78 & 53.42 & 0.1 & 0.4 & 1579 & 14.45 & 53.35 \\
		FBHFLSR    & 0.3 & 0.1 & 391 & 9.86  & 51.90 & 0.3 & 0.1 & 600  & 16.00 & 53.40 & 0.3 & 0.1 & 844  & 22.03 & 53.35 \\
		FBHFLS    & 0.3 & 0.1 & 462 & \textbf{8.00}  & 51.90 & 0.3 & 0.1 & 706  & \textbf{11.50} & 53.38 & 0.4 & 0.4 & 1165 & 18.76 & 53.35 \\        		\bottomrule
	\end{tabular}
    } \caption{Numerical results for $p=1.5$ in terms of average number of iterations (NI), average CPU time in seconds (T), and the value of the objective function (scaled by $10^{-6}$). For each dimension, the best CPU time is highlighted in \textbf{black}.}
	\label{T:resimages1}
\end{table}

\begin{table}[h!]
	\centering
    \vspace{0.15cm}
    \resizebox{\textwidth}{!}{
	\begin{tabular}{l ccccc ccccc ccccc }
		\toprule
		$p = 1.2$ & \multicolumn{5}{c}{Kernel $3\times 3$} & 
		\multicolumn{5}{c}{Kernel $6\times 6$} & \multicolumn{5}{c}{Kernel $9\times 9$}  \\
		\cmidrule(lr){2-6} \cmidrule(lr){7-11} \cmidrule(lr){12-16}
		\textbf{Algorithm}  & $\varepsilon$ & $\tau$ & \textbf{NI} & \textbf{T} & $F(x_n)$ 
	& $\varepsilon$ & $\tau$ & \textbf{NI} & \textbf{T} & $F(x_n)$ 
	& $\varepsilon$ & $\tau$ & \textbf{NI} & \textbf{T} & $F(x_n)$ \\
		\midrule
		FHRBLSR    & 0.2 & 0.1 & 1391 & 9.94 & 11.98 & 0.2 & 0.1 & 1517 & 10.82 & 13.03 & 0.2 & 0.1 & 1606 & 11.01 & 13.21 \\
		FHRBLS    & 0.2 & 0.1 & 2019 & \textbf{5.17} & 11.96 & 0.3 & 0.2 & 2457 & \textbf{6.38} & 13.04 & 0.2 & 0.1 & 2217 & \textbf{5.45} & 13.22 \\
		FBHFLSR    & 0.3 & 0.1 & 1482 & 13.32 & 12.20 & 0.3 & 0.1 & 1505 & 13.45 & 13.04 & 0.3 & 0.1 & 1471 & 12.76 & 13.18 \\
		FBHFLS    & 0.3 & 0.1 & 1482 & 6.55 & 12.20 & 0.3 & 0.1 & 1505 & 6.70 & 13.04 & 0.3 & 0.1 & 1471 & 6.28 & 13.18 \\		
		\midrule
        $p = 1.8$ & \multicolumn{5}{c}{Kernel $3\times 3$} & 
		\multicolumn{5}{c}{Kernel $6\times 6$} & \multicolumn{5}{c}{Kernel $9\times 9$}  \\
		\cmidrule(lr){2-6} \cmidrule(lr){7-11} \cmidrule(lr){12-16}
		\textbf{Algorithm}  & $\varepsilon$ & $\tau$ & \textbf{NI} & \textbf{T} & $F(x_n)$ 
	& $\varepsilon$ & $\tau$ & \textbf{NI} & \textbf{T} & $F(x_n)$ 
	& $\varepsilon$ & $\tau$ & \textbf{NI} & \textbf{T} & $F(x_n)$ \\
		\midrule
		FHRBLSR    & 0.1 & 0.1 & 290 & 0.98 & 13.65 & 0.1 & 0.1 & 302 & \textbf{1.12} & 13.68 & 0.1 & 0.1 & 377 & 1.48 & 13.53 \\
		FHRBLS    & 0.2 & 0.1 & 391 & 0.97 & 13.46 & 0.5 & 0.2 & 470 & 1.22 & 13.67 & 0.2 & 0.1 & 547 & \textbf{1.37} & 13.47 \\
		FBHFLSR    & 0.1 & 0.1 & 226 & 1.17 & 13.65 & 0.1 & 0.1 & 332 & 1.78 & 13.68 & 0.2 & 0.2 & 329 & 2.24 & 13.54 \\
		FBHFLS    & 0.1 & 0.1 & 198 & \textbf{0.85} & 13.64 & 0.8 & 0.1 & 259 & 1.16 & 13.68 & 0.2 & 0.2 & 322 & 1.41 & 13.53 \\        
		\bottomrule
	\end{tabular}
    }\caption{Numerical results for the $256 \times 256$ image and $p = 1.2$ and $p = 1.8$. Objective function values are scaled by $10^{-6}$. For each setting, the best CPU time is highlighted in \textbf{black}.}
	\label{T:res_p_variation}
\end{table}


\begin{figure}
\centering
\subfloat[\scriptsize $256\times 256$ pixels]{\label{fig:x256}\includegraphics[scale=0.3]{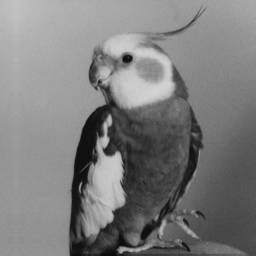}}\,
\subfloat[\tiny Blur/Noisy (22.00)]{\label{b_256_k3}\includegraphics[scale=0.3]{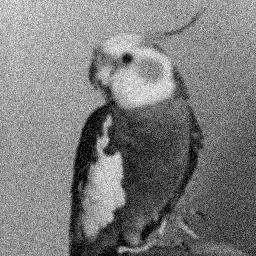}}\\
\raisebox{1.3cm}{\makebox[0.05\textwidth]{$p=1.2$ \qquad}}
\subfloat[\tiny FHRBLSR (25.79)]{\label{x_FRB1_rho_N12_256_k3}\includegraphics[scale=0.3]{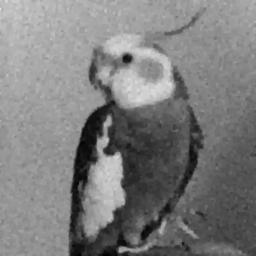}}\,
\subfloat[\tiny FHRBLS (25.80)]{\label{x_FRB2_rho_N12_256_k3}\includegraphics[scale=0.3]{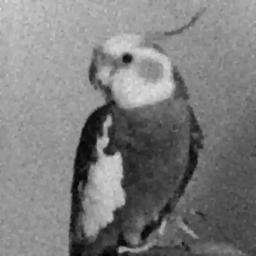}}\,
\subfloat[\tiny FBHFLSR (25.80)]{\label{x_FBF1_rho_N12_256_k3}\includegraphics[scale=0.3]{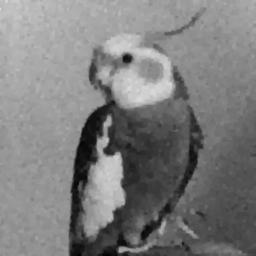}}\,
\subfloat[\tiny FBHFLS (25.81)]{\label{x_FBF2_rho_N12_256_k3}\includegraphics[scale=0.3]{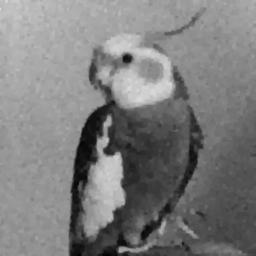}}\\
\raisebox{1.3cm}{\makebox[0.05\textwidth]{$p=1.5$ \qquad}}
\subfloat[\tiny FHRBLSR (31.36)]{\label{x_FRB1_rho_N15_256_k3}\includegraphics[scale=0.3]{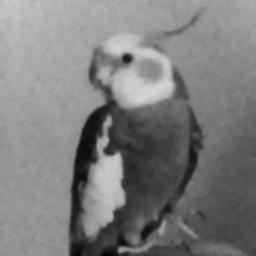}}\,
\subfloat[\tiny FHRBLS (31.32)]{\label{x_FRB2_rho_N15_256_k3}\includegraphics[scale=0.3]{x_FRB2_rho_N15_256_k3.jpg}}\,
\subfloat[\tiny FBHFLSR (31.35)]{\label{x_FBF1_rho_N15_256_k3}\includegraphics[scale=0.3]{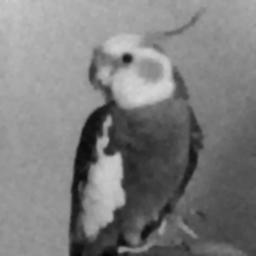}}\,
\subfloat[\tiny FBHFLS (31.31)]{\label{x_FBF2_rho_N15_256_k3}\includegraphics[scale=0.3]{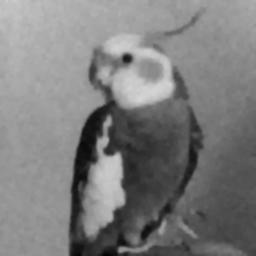}}\\
\raisebox{1.3cm}{\makebox[0.05\textwidth]{$p=1.8$ \qquad}}
\subfloat[\tiny FHRBLSR (29.37)]{\label{x_FRB1_rho_N18_256_k3}\includegraphics[scale=0.3]{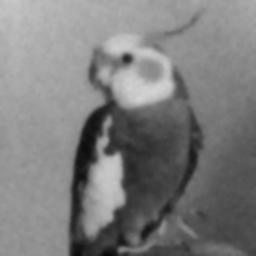}}\,
\subfloat[\tiny FHRBLS (29.67)]{\label{x_FRB2_rho_N18_256_k3}\includegraphics[scale=0.3]{x_FRB2_rho_N15_256_k3.jpg}}\,
\subfloat[\tiny FBHFLSR (29.37)]{\label{x_FBF1_rho_N18_256_k3}\includegraphics[scale=0.3]{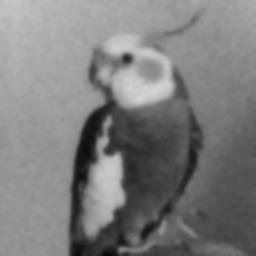}}\,
\subfloat[\tiny FBHFLS (29.38)]{\label{x_FBF2_rho_N18_256_k3}\includegraphics[scale=0.3]{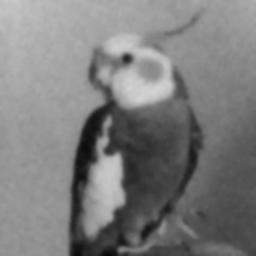}}
\caption{Restored images for $256\times 256$ pixels, $3\times 3$ kernel, and $p \in \{1.2,1.5,1.8\}$. The values in parentheses represent the PSNR of the displayed image relative to the original image.}\label{fig:256k3} 
\end{figure}

\begin{figure}
\centering
\subfloat[\scriptsize $512\times 512$ pixels]{\label{fig:x512_k6}\includegraphics[scale=0.19]{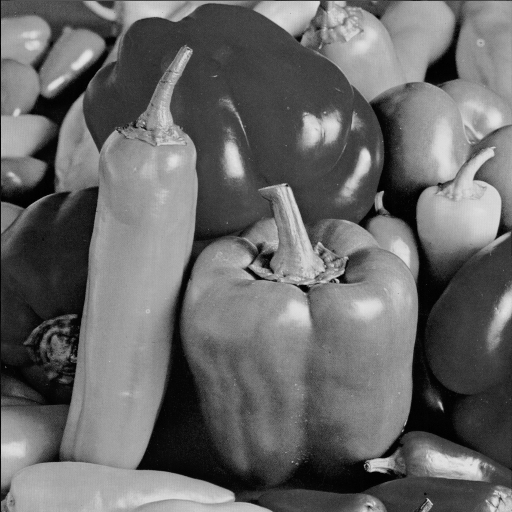}}\,
\subfloat[\tiny Blur/Noisy (20.91)]{\label{b_512_k6}\includegraphics[scale=0.19]{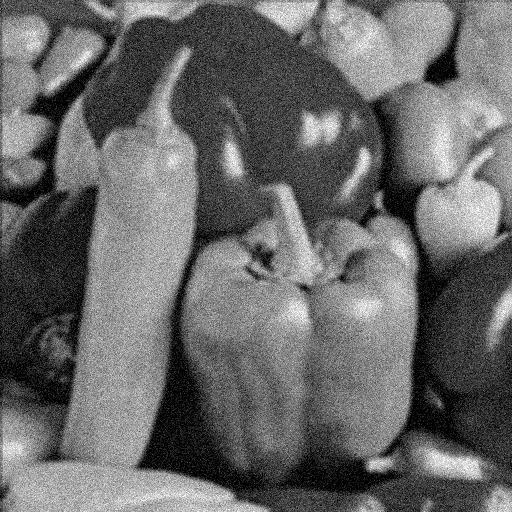}}\\
\subfloat[\tiny FHRBLSR (26.96)]{\label{x_FRB1_rho_N15_512_k6}\includegraphics[scale=0.19]{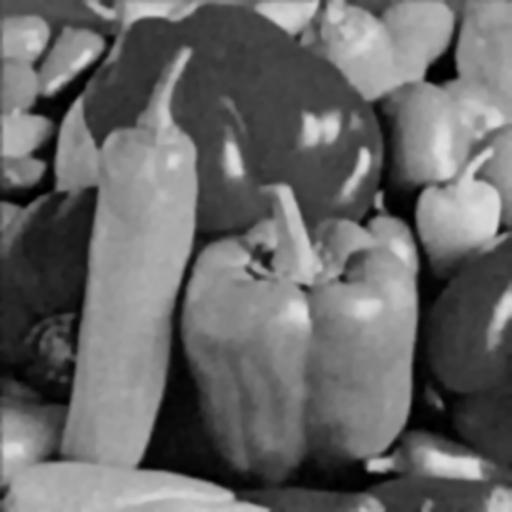}}\,
\subfloat[\tiny FHRBLS (26.99)]{\label{x_FRB2_rho_N15_512_k6}\includegraphics[scale=0.19]{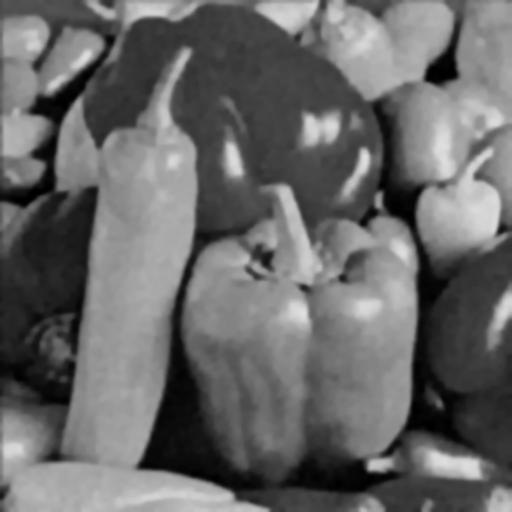}}\,
\subfloat[\tiny FBHFLSR (26.95)]{\label{x_FBF1_rho_N15_512_k6}\includegraphics[scale=0.19]{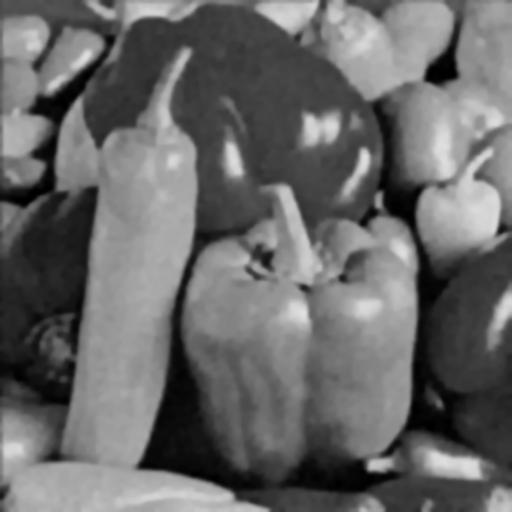}}\,
\subfloat[\tiny FBHFLS (26.95)]{\label{x_FBF2_rho_N15_512_k6}\includegraphics[scale=0.19]{x_FBF2_rho_N15_512_k6.jpg}}
\caption{Restored images for $512\times 512$ pixels, $6\times 6$ kernel, and $p =1.5$. The values in parentheses represent the PSNR of the displayed  image relative to the original image.}\label{fig:512k6} 
\end{figure}

\begin{figure}
\centering
\subfloat[\scriptsize $512\times 512$ pixels]{\label{fig:x512}\includegraphics[scale=0.19]{x_original512.png}}\,
\subfloat[\tiny Blur/Noisy (20.51)]{\label{b_512_k3}\includegraphics[scale=0.19]{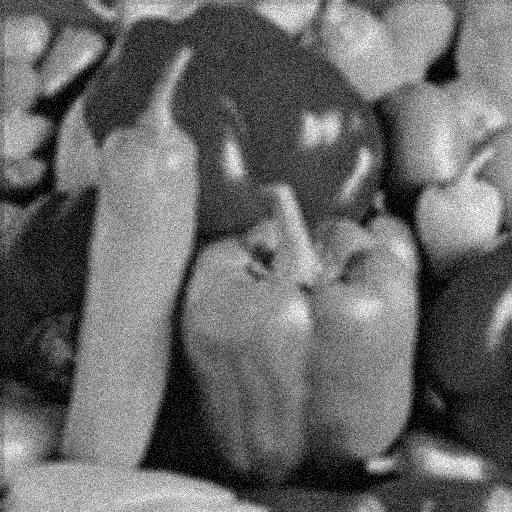}}\\
\subfloat[\tiny FHRBLSR (25.81)]{\label{x_FRB1_rho_N15_512_k9}\includegraphics[scale=0.19]{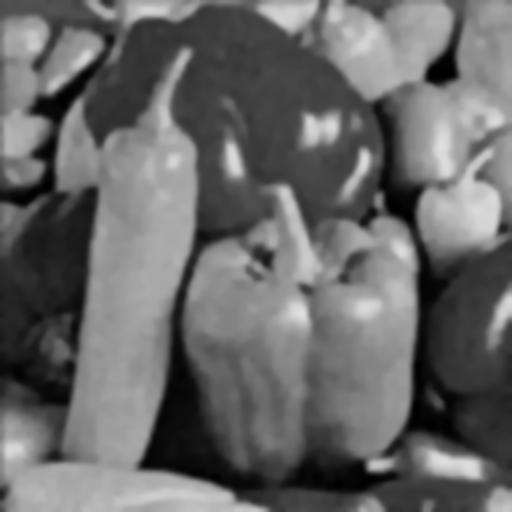}}\,
\subfloat[\tiny FHRBLS (25.78)]{\label{x_FRB2_rho_N15_512_k9}\includegraphics[scale=0.19]{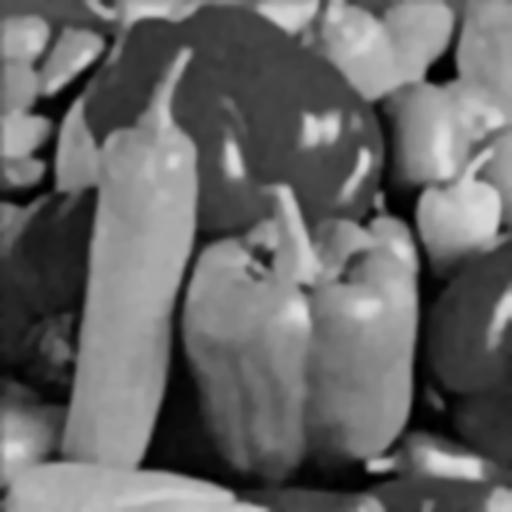}}\,
\subfloat[\tiny FBHFLSR (25.82)]{\label{x_FBF1_rho_N15_512_k9}\includegraphics[scale=0.19]{x_FBF2_rho_N15_512_k6.jpg}}\,
\subfloat[\tiny FBHFLS (25.82)]{\label{x_FBF2_rho_N15_512_k9}\includegraphics[scale=0.19]{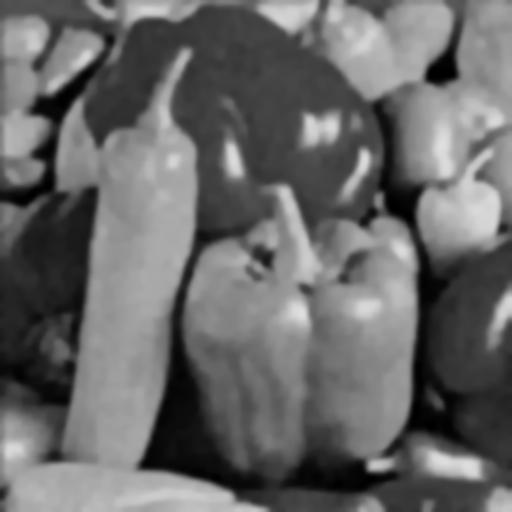}}
\caption{Restored images for $512\times 512$ pixels, $9\times 9$ kernel, and $p =1.5$. The values in parentheses represent the PSNR of the displayed image relative to the original image.}\label{fig:512k9} 
\end{figure}

\section{Conclusion}\label{sec:conclu}
In this article, we studied the convergence of the forward-reflected-backward algorithm for solving monotone inclusions involving merely continuous operators. We included an example demonstrating that the existing linesearch for locally Lipschitz continuous operators fails to terminate when the operator is only continuous. Motivated by this example, we proposed a new linesearch strategy that is guaranteed to terminate in a finite number of steps, and we proved the weak convergence of the proposed method to a solution. In addition, we extended the proposed method to a four-operator splitting scheme capable of solving inclusions that additionally incorporate cocoercive and Lipschitz continuous operators, applying this framework to convex optimization problems with nonlinear constraints. Finally, we provided numerical experiments on saddle-point and image deblurring problems, demonstrating the computational advantages of the proposed methods.
\section*{Acknowledgment}
The authors were partially supported by ANID through FONDECYT Iniciación Grant 11250164.

\end{document}